\newtheorem{thm}{Theorem}
\newtheorem{lmm}{Lemma}
\newtheorem{crl}{Corollary}
\newtheorem*{crlp}{Corollary 2$'$}
\theoremstyle{definition}
\newtheorem*{dfn}{Definition}
\newtheorem*{qst}{Question}
\DeclareMathOperator{\aff}{aff}
\DeclareMathOperator{\relint}{relint}
\newcommand{\E}{\mathbb{E}}
\newcommand{\sB}{\mathcal{B}}
\newcommand{\sC}{\mathcal{C}}
\newcommand{\sL}{\mathcal{L}}
\newcommand{\sT}{\mathcal{T}}
\newcommand{\enleve}{\setminus}
\DeclarePairedDelimiter{\abs}{\lvert}{\rvert}  
\DeclarePairedDelimiter{\grpgen}{\langle}{\rangle}
\newcommand{\midbar}{\mathrel{}\mathclose{}\delimsize|\mathopen{}\mathrel{}} 
\DeclarePairedDelimiterX{\scprd}[2]{\langle}{\rangle}{#1,#2}
\DeclarePairedDelimiterX{\set}[2]{\{}{\}}{\,#1 \midbar #2\,}
\DeclarePairedDelimiterX{\grppres}[2]{\langle}{\rangle}{#1 \midbar #2}
\newcommand*{\stak}[2]{\genfrac{}{}{0pt}{}{#1}{#2}}
\newcommand{\res}{\kern -0.1em\mathpunct{\upharpoonright}}
\title{Combinatorially two-orbit convex polytopes}
\author[Matteo]{Nicholas Matteo}
\address{Department of Mathematics, 567 Lake Hall\\Northeastern University\\Boston, MA 02115}
\email{matteo.n@husky.neu.edu}
\keywords{Two-orbit, convex polytopes, tilings, half-regular, quasiregular}
\subjclass[2010]{Primary 52B15; Secondary 51M20, 51F15, 52C22}
\date{\today}
\begin{document}
\begin{abstract}
Any convex polytope whose combinatorial automorphism group has two orbits on the 
flags is isomorphic to one whose group of Euclidean symmetries has two orbits on 
the flags (equivalently, to one whose automorphism group and symmetry group 
coincide). Hence, a combinatorially two-orbit convex polytope is isomorphic to 
one of a known finite list, all of which are 3-dimensional: the cuboctahedron, 
icosidodecahedron, rhombic dodecahedron, or rhombic triacontahedron. The same is 
true of combinatorially two-orbit normal face-to-face tilings by convex 
polytopes.
\end{abstract}
\maketitle

\section{Introduction}
Regular polytopes are those whose symmetry groups act transitively on their 
flags (see Section~\ref{prelim} for definitions; throughout this article, 
``polytope'' means convex polytope.)
We say that a polytope whose symmetry group has $k$ orbits on the flags is a 
\emph{$k$-orbit polytope}, so the regular polytopes are the one-orbit polytopes. The
one-orbit polytopes in the plane (the regular polygons) and in 3-space (the
Platonic solids) have been known for millenia; the six one-orbit 4-polytopes and
the three one-orbit $d$-polytopes for every $d \geq 5$ have been known since the 
19th century.
In \cite{matteo2014two}, the author found all the two-orbit polytopes.
These exist only in the plane and in 3-space.
In the plane, there are two infinite families, one consisting of the irregular
isogonal polygons, and the other consisting of the irregular isotoxal polygons.
Here, \emph{isogonal} means that the symmetry group acts transitively on the 
vertices, and \emph{isotoxal} means that the symmetry group acts transitively on 
the edges.
In 3-space, there are only four: the two quasiregular polyhedra,
namely the cuboctahedron and the icosidodecahedron, and their duals,
the rhombic dodecahedron and the rhombic triacontahedron.

A polytope is \emph{combinatorially $m$-orbit} if its automorphism group has $m$ orbits
on the flags. In general, a polytope has more combinatorial automorphisms
of its face lattice than it has Euclidean symmetries.
Hence, if the symmetry group has $k$ flag orbits
and the automorphism goup has $m$ flag orbits,
then $m \leq k$; in fact $m \mid k$.
Furthermore, not every polytope can be realized
such that every automorphism is also a Euclidean isometry;
\cite{bokowski1984combinatorial} constructs a combinatorially 84-orbit
4-polytope $P$
which is not isomorphic to any polytope $P'$ whose symmetry group $G(P')$
is equal to the automorphism group $\Gamma(P')$.
However, it is proved in \cite[Theorem 3A1]{McMThesis}
that a polytope is combinatorially one-orbit if and only if
it is isomorphic to a (geometrically) one-orbit polytope.
In this paper, we show that every combinatorially two-orbit polytope
is isomorphic to a (geometrically) two-orbit polytope. The converse is not quite true, 
since any $2n$-gon is isomorphic to a two-orbit polytope,
yet is not combinatorially two-orbit.

In Section~\ref{tilings} we show, similarly, that combinatorially two-orbit
normal face-to-face tilings by convex polytopes are isomorphic to two-orbit tilings.
It seems that the corresponding question for one-orbit tilings remains open,
with a finite list of possible exceptions.
We summarize the results in these theorems.

\begin{thm}\label{thm:combintopes}
Any combinatorially two-orbit convex polytope
is isomorphic to a (geometrically) two-orbit convex polytope.
Hence,
if $P$ is a combinatorially two-orbit convex $d$-polytope,
then $d = 3$ and $P$ is isomorphic to one of
the cuboctahedron, the icosidodecahedron,
the rhombic dodecahedron, or the rhombic triacontahedron.
\end{thm}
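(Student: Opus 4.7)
The plan is to treat the problem one dimension at a time, using the classification of geometrically two-orbit polytopes from \cite{matteo2014two} as a black box. When $d=2$, the combinatorial automorphism group of any $n$-gon is the full dihedral group $D_{2n}$ acting flag-transitively, so no polygon is combinatorially two-orbit and the hypothesis is vacuous. When $d=3$, I would invoke Mani's theorem that every convex $3$-polytope is combinatorially isomorphic to one whose Euclidean symmetry group realizes the full combinatorial automorphism group; applied to a combinatorially two-orbit $P$, this yields an isomorphic $P'$ with $G(P') = \Gamma(P')$ acting with two flag orbits, so $P'$ is geometrically two-orbit, and the classification of \cite{matteo2014two} pins $P'$ down to one of the four listed polyhedra. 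The substantive content therefore reduces to showing that no combinatorially two-orbit convex $d$-polytope exists for $d \geq 4$.

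For $d \geq 4$ I would classify a hypothetical such $P$ by its flag-orbit class $2_I$, where $I \subsetneq \{0, \ldots, d-1\}$ records the ranks $i$ at which $i$-adjacency preserves the flag orbit. The membership of $0$ and $d-1$ in $I$ governs the combinatorics of vertex figures and facets respectively: in each case the facets and vertex figures turn out to be either combinatorially one-orbit (hence, by \cite[Theorem 3A1]{McMThesis}, regular) or themselves combinatorially two-orbit of strictly smaller rank. Inducting on $d$, this forces the facets and vertex figures of $P$ to come from the very short list the theorem will have already established in ranks $\leq 3$: regular convex polytopes, and in rank three the cuboctahedron, icosidodecahedron, rhombic dodecahedron, and rhombic triacontahedron.

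With facets and vertex figures restricted to this short list, I would rule out each admissible class $2_I$ by checking local compatibility at a $(d-3)$-face. The chiral case $I = \emptyset$ is handled separately via the classical nonexistence of chiral convex polytopes. For each remaining $I$, the Schl\"afli-symbol data of the allowed facets and vertex figures, combined with the cycle of facets meeting at a $(d-3)$-face and the convexity constraint, eliminates the case by a finite combinatorial check.

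The principal obstacle is the base case $d = 4$, where both facets and vertex figures may be any of the four exceptional polyhedra. Eliminating these will require analyzing the dihedral angles of the cuboctahedron, icosidodecahedron, rhombic dodecahedron, and rhombic triacontahedron and verifying that no assignment of these polyhedra as facets — in the two-orbit pattern prescribed by the class $2_I$ — closes up convexly around a $2$-face. Once this base case is settled, all higher-dimensional cases follow by the inductive reduction above, since a two-orbit polytope of rank $\geq 5$ would force its facets and vertex figures to be themselves two-orbit of rank $\geq 4$, contradicting the base case.
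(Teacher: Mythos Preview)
Your $d=3$ step via Mani's theorem is correct and is a genuine shortcut compared with the paper, which instead builds up $\Gamma(P)$ as a Coxeter group with an explicit ``tail-triangle'' diagram and then invokes the classification of finite Coxeter groups. Mani's theorem lets you bypass all of that in dimension~3.

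However, your treatment of $d\ge 4$ has two gaps. The more serious one is the proposed dihedral-angle argument for $d=4$. The facets of a combinatorially two-orbit $4$-polytope are only \emph{combinatorially} isomorphic to rhombic dodecahedra, rhombic triacontahedra, etc.; their dihedral angles are not determined by this combinatorial type, so looking up ``the'' dihedral angle of a rhombic dodecahedron and checking whether four of them exceed $2\pi$ proves nothing about an arbitrary convex realization. The paper avoids this by using quantities that \emph{are} robust under convex realization: for the class $2_{\{1,2\}}$ it sums the interior angles of the quadrilateral $2$-faces (always $2\pi$ in any convex quadrilateral) against the angular defect at each vertex of a facet (always positive in any convex $3$-polytope), obtaining a contradiction $2\pi f_2 < 2\pi f_2$. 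For the facet-intransitive and vertex-intransitive classes in rank~$4$ the paper does not use angles at all; it shows $\Gamma(P)$ is exactly the Coxeter group of the diagram in Figure~\ref{coxdiagram}, and then observes that no finite Coxeter diagram on four or more nodes has the required branching with $p_{d-2}\ne q_{d-2}$.

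The second gap is in your inductive step for $d\ge 5$: the facets and vertex figures of a two-orbit $d$-polytope need not themselves be two-orbit---they could perfectly well be combinatorially regular. Once the base case is done, induction only tells you they are regular, and you still need the (nontrivial) fact that a rank-$d$ convex polytope with $d\ne 3$ whose facets and vertex figures are all combinatorially regular is itself combinatorially regular (this is Theorem~4A4 of \cite{McMThesis}, restated in the paper as Lemma~\ref{CombRegVerfsFacets}). Without this lemma your induction does not close.
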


In light of the fact that, for $d > 2$, all two-orbit convex $d$-polytopes
are combinatorially two-orbit,
and that both conditions are vacuous for $d \leq 1$,
we can say that a convex $d$-polytope with $d \neq 2$ is combinatorially two-orbit
if and only if it is isomorphic to a two-orbit convex polytope.

\begin{thm}\label{thm:lftilings}
A locally finite, combinatorially two-orbit tiling by convex polytopes
need not be isomorphic to a two-orbit tiling by convex polytopes.
However, locally finite, combinatorially two-orbit tilings of $\E^d$ by convex polytopes
only occur for $d = 2$ or $d = 3$.
\end{thm}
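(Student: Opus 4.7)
The statement combines two logically independent claims: a counterexample showing that combinatorial two-orbit status need not be realized geometrically, and a dimension restriction to $d \in \{2,3\}$. I would handle these separately.

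For the counterexample, I would exhibit an explicit locally finite tiling whose combinatorial automorphism group acts with two flag orbits, but which admits no Euclidean realization with the same property. Since the statement drops the normality and face-to-face hypotheses used in the main result of Section~\ref{tilings}, there is ample room to deform tiles without disturbing the combinatorial structure. A natural recipe is to start from a known geometrically two-orbit tiling (say the tetrahedral-octahedral honeycomb, or one of its 2-dimensional analogues) and perturb the tiles of a single combinatorial orbit by an aperiodic family of shearings or rescalings. The combinatorial adjacency relations, and hence $\Gamma(T)$, are unchanged, while the Euclidean symmetry group of the perturbed tiling is forced to have more than two flag orbits in any realization of this combinatorial type.

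For the dimension bound, I would argue from the local structure of $T$, mimicking the method behind Theorem~\ref{thm:combintopes}. Let $T$ be a combinatorially two-orbit locally finite tiling of $\E^d$ by convex polytopes with $d \geq 4$, and let $\Gamma = \Gamma(T)$. The induced actions of $\Gamma$ on the tiles (the $(d-1)$-face lattices) and on the vertex figures each have at most two orbits on flags. Since the tiles and vertex figures are themselves convex polytopes of dimension $d-1 \geq 3$, Theorem~\ref{thm:combintopes} restricts any two-orbit tile or vertex figure to the four listed 3-polytopes, and for $d \geq 5$ excludes the two-orbit case entirely. Hence for $d \geq 5$ every tile and vertex figure must be combinatorially regular, so $T$ is combinatorially a regular honeycomb of $\E^d$. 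Since the only regular honeycomb of $\E^d$ for $d \geq 5$ is the cubic honeycomb---which is combinatorially one-orbit---this contradicts the two-orbit hypothesis.

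The main obstacle is the case $d = 4$. Here one must also handle the regular 4-polytopes beyond the simplex, cube, and cross-polytope (the $24$-cell, $120$-cell, and $600$-cell) and the exotic regular honeycombs $\{3,3,4,3\}$ and $\{3,4,3,3\}$. I would enumerate the candidate tilings of $\E^4$ whose tiles and vertex figures are regular polytopes drawn from this list, and show that each either is combinatorially isomorphic to one of the known one-orbit regular honeycombs or fails to admit any automorphism group with exactly two flag orbits. This requires a finite but delicate case analysis, using Schläfli-symbol compatibility to pin down the possible combinations of tile and vertex figure that fit together locally finitely and face-to-face in $\E^4$, then checking the flag-orbit count in each surviving case.
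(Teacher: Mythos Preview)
Your treatment of the first claim misreads the statement. The assertion is that some combinatorially two-orbit LFC tiling is not \emph{isomorphic} to any geometrically two-orbit tiling; it is a statement about isomorphism classes, not about particular realizations. If you start from the tetrahedral-octahedral honeycomb and perturb the tiles, you yourself observe that the combinatorial adjacency relations are unchanged---so your perturbed tiling is, by construction, isomorphic to a two-orbit tiling (namely the one you started from). Your conclusion that ``the Euclidean symmetry group \ldots\ is forced to have more than two flag orbits in any realization of this combinatorial type'' is therefore false: the original honeycomb is one such realization. The paper instead produces genuinely new combinatorial types: it truncates a combinatorially regular planar tiling $\{p,3\}$ (which exists for every $p \geq 6$) to obtain an LFC tiling of type $(3.p.3.p)$. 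For $p \geq 7$ this has a Schl\"afli symbol distinct from all four known two-orbit tilings, so it cannot be isomorphic to any of them.

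For the dimension bound you have the right overall shape, but two gaps. First, an off-by-one error: a tiling of $\E^d$ has rank $d+1$, so its tiles and vertex figures are $d$-polytopes, not $(d-1)$-polytopes. With this correction, for every $d \geq 4$ Theorem~\ref{thm:combintopes} already forces all tiles and vertex figures to be combinatorially regular, and your ``delicate case analysis'' for $d=4$ becomes unnecessary. Second, and more seriously, you assert without proof that ``every tile and vertex figure combinatorially regular'' implies ``$T$ combinatorially regular.'' This is the real content; the paper isolates it as a lemma (following McMullen): for rank $\neq 3$, an LFC tiling with combinatorially regular facets and vertex figures is equivelar, hence combinatorially regular. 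The argument propagates the local Schl\"afli entries along face-chains of vertices/edges and of facets/ridges. Once that lemma is in hand, the contradiction is immediate---a combinatorially regular tiling has one flag orbit---and no appeal to the classification of Euclidean regular honeycombs is needed.
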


Terms related to tilings (such as ``normal'') are defined at the
beginning of Section~\ref{tilings}.

\begin{thm}\label{thm:normtilings}
Any combinatorially two-orbit, normal tiling by convex polytopes
is isomorphic to a two-orbit tiling by convex polytopes.
Hence, if $\sT$ is a combinatorially two-orbit normal tiling of $\E^d$ by convex polytopes, then either
\begin{enumerate}[label=\upshape(\roman*)]
\item $d=2$ and $\sT$ is isomorphic to one of
the trihexagonal tiling or the rhombille tiling, or
\item $d=3$ and $\sT$ is isomorphic to one of
the tetrahedral-octahedral honeycomb or the rhombic dodecahedral honeycomb.
\end{enumerate}
\end{thm}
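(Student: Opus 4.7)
The plan is to mimic the proof of Theorem~\ref{thm:combintopes}, adapted to the tiling setting by using the normality hypothesis as a substitute for the compactness of a single polytope. First I would classify the possible combinatorial ``symmetry types'' of a two-orbit tiling: as in the polytope case, the two flag orbits are determined by a proper subset $I \subsetneq \{0, 1, \ldots, d-1\}$ recording which $i$-adjacencies preserve the flag orbit. There are only finitely many such types, and one can analyze them case by case.

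For each symmetry type, I would extract the combinatorial structure of the individual tiles and vertex figures. The two-orbit condition on flags forces each tile and each vertex figure to be either combinatorially regular or itself combinatorially two-orbit. Applying \cite[Theorem 3A1]{McMThesis} in the regular case and Theorem~\ref{thm:combintopes} in the two-orbit case, I would conclude that every tile and every vertex figure admits a geometric realization whose Euclidean symmetry group equals its combinatorial automorphism group; moreover, the list of combinatorial types of tiles is very short (in dimension $3$, essentially tetrahedra, octahedra, and rhombic dodecahedra, and in dimension $2$, triangles and rhombi). Normality then promotes this combinatorial rigidity to metric rigidity: because the tiles are uniformly bounded and the tiling is locally finite and face-to-face, all tiles of a given combinatorial type may be taken mutually congruent, and the local neighborhood of any face depends only on the combinatorial type of the face. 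Assembling these congruent local models yields a geometric realization in which every combinatorial automorphism is a Euclidean isometry, which is precisely what is needed to conclude that the tiling is isomorphic to a geometrically two-orbit tiling. The classification into the four listed examples then follows from the known list of geometrically two-orbit normal tilings by convex polytopes in $\E^d$, which in particular forces $d \in \{2,3\}$.

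The main obstacle is the gluing step: verifying that the local metric data really assemble into a \emph{global} Euclidean symmetry of the tiling. Combinatorially, the two-orbit property guarantees that corresponding local neighborhoods of faces match up, but one must check that the associated local congruences are compatible along shared faces and extend consistently to an isometry of all of $\E^d$. This propagation argument, carried out using the connectedness of the face lattice of $\sT$ and the uniform bounds provided by normality, is what replaces the compactness argument available in the polytope setting, and it is the most delicate part of the proof. Without normality, as Theorem~\ref{thm:lftilings} indicates, this step can fail, so the normality hypothesis is used essentially here.
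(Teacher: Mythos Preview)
Your proposal has a genuine gap at the step where you claim ``normality then promotes this combinatorial rigidity to metric rigidity: \ldots all tiles of a given combinatorial type may be taken mutually congruent.'' Normality only says that tile sizes are uniformly bounded above and below; it does not force tiles of the same combinatorial type to be congruent, nor does it let you replace them by congruent copies while preserving the face-to-face structure. (Think of a normal tiling by rectangles of varying aspect ratios.) So the gluing argument you outline never gets off the ground, and the appeal to the known list of geometrically two-orbit tilings is circular until you have actually ruled out the other combinatorial candidates.

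The substantive issue you are missing is that the case analysis on tiles and vertex figures produces, in addition to the four Euclidean examples, several \emph{hyperbolic} combinatorial types---in dimension~2 the families $(3.p.3.p)$ for $p \geq 7$, and in dimension~3 the types $\{3,\stak 3 5,4\}$, $\{4,\stak 3 5,3\}$, $\{4,\stak 3 4,4\}$, $\{4,\stak 3 5,4\}$. These have perfectly good local models with congruent regular or two-orbit pieces, but those pieces assemble in hyperbolic space, not in $\E^d$; in $\E^d$ one might still realize them as non-normal LFC tilings (the paper leaves this open). The role of normality is therefore not to manufacture congruences, but to supply \emph{growth and counting constraints} that exclude these hyperbolic types: in dimension~2 the paper uses the Euler-type identities for normal tilings from Gr\"unbaum--Shephard (their 3.5.1 and 3.5.4), and in dimension~3 it shows, for instance, that a tiling combinatorially of type $\{3,\stak 3 5,4\}$ would force exponentially many tiles in a ball of radius $r$, contradicting the polynomial bound coming from the inradius/circumradius bounds in a normal tiling. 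That exclusion argument is the heart of the proof, and it is absent from your outline.
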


\section{Preliminaries}\label{prelim}
We briefly review the terminology used. See \cite{ARP,grunbaum1967convex,coxeter1973regular}
for details.
\subsection{Basic terminology for polytopes}
A \emph{convex polytope} is the convex hull of finitely many points in $\E^d$.
Throughout this article, ``polytope,'' unqualified, means ``convex polytope.''
The dimension of a polytope is the dimension of its affine hull;
a polytope $P$ of dimension $d$ is called a $d$-polytope,
and the faces of $P$ with dimension $i$ are its \emph{$i$-faces}.
The 0-faces are called \emph{vertices}, 1-faces are called \emph{edges},
$(d-2)$-faces are called \emph{ridges}, and $(d-1)$-faces are called \emph{facets}.
In addition to these \emph{proper} faces, we admit two \emph{improper} faces,
namely a $(-1)$-face (the \emph{empty face}) and a $d$-face (which is $P$ itself).
With the inclusion of these improper faces, the faces of $P$ ordered by inclusion
form a lattice, the \emph{face lattice of $P$}, denoted $\sL(P)$.
A \emph{flag} of $P$ is a maximal chain (linearly ordered subset) in $\sL(P)$.
For any flag $\Phi$, an \emph{adjacent} flag is one which differs from $\Phi$ in exactly one face.
The flags are \emph{$i$-adjacent} if they differ in only the $i$-face.
Every flag $\Phi$ has a unique $i$-adjacent flag for $i = 0, \dotsc, d-1$,
denoted $\Phi^i$ (this is due to the ``diamond condition'' on polytopes.)
Two faces are said to be \emph{incident} if one contains the other.
A \emph{section} of $P$, for incident faces $F \subset G$,
is the portion of the face lattice $\sL(P)$ consisting of all the faces containing $F$ and contained in $G$,
and is denoted $G / F$. So $G / F = \set{H \in \sL(P)}{F \leq H \leq G}$, inheriting the order.
Every such section can be realized as the face lattice of a convex polytope,
and we will often identify convex polytopes with their face lattices.

For a convex polytope $P$, 
the \emph{symmetries} of $P$
are the Euclidean isometries which carry $P$ onto itself,
and form a group denoted $G(P)$.
The \emph{automorphisms} of $P$ are inclusion-preserving bijections
from the face lattice of $P$ to itself, and form a group denoted $\Gamma(P)$.
Each symmetry of $P$ also acts on the faces of $P$ in an inclusion-preserving manner,
so we can identify $G(P)$ with a subgroup of $\Gamma(P)$.
A $d$-polytope is said to be \emph{fully transitive} if its symmetry group acts
transitively on its $i$-faces for every $i = 0, \dotsc, d-1$.
It is \emph{combinatorially fully transitive} if its automorphism group
acts transitively on the faces of each dimension. In this case we may instead say that $\Gamma(P)$
is fully transitive.

\subsection{Class}
Let $I \subset \{0,1,\dotsc,d-1\}$ and
$\Phi$ be a flag of a combinatorially two-orbit $d$-polytope $P$.
If the $i$-adjacent flag $\Phi^i$
is in the same orbit as $\Phi$ if and only if $i \in I$,
then we say $P$ is in class $2_I$ \cite{hubard2010two,hubard2009monodromy}.
It is not hard to see that this class is well-defined; see \cite{hubard2010two} for proofs of this
and the following remarks.
The automorphism group $\Gamma(P)$ is fully transitive if and only if $\abs{I} \leq d - 2$.
We cannot have $\abs{I} = d$, because then $P$ would be combinatorially regular.
The only other case is that $\abs{I} = d - 1$, and then $\Gamma(P)$ acts transitively
on all $i$-faces with $i \in I$, but has two orbits on the $j$-faces for the unique $j$ not in $I$.
\begin{dfn}
An (abstract) polytope $P$ is \emph{$j$-intransitive}
if $\Gamma(P)$ does not act transitively on the $j$-faces,
but acts transitively on the $i$-faces for all $i \neq j$.
\end{dfn}
We shall see that all two-orbit polytopes are $j$-intransitive for some $j$.

\subsection{Modified Schl\"afli symbol}
The Schl\"afli symbol of a polytope is a standard concept; see e.g.\@ \cite[11]{ARP},\cite{mcmullen1967combinatorially}, or \cite{coxeter1973regular}.
For a regular $d$-polytope $P$, it is a list of $d-1$ numbers, 
$\{p_1, \dotsc, p_{d-1}\}$, where $p_i$ is the order of 
the automorphism $(\rho_{i-1} \rho_i)$,
where $\rho_k$ is an involution which carries a base flag $\Phi$ to its $k$-adjacent flag $\Phi^k$.
For convex polytopes, it is equivalent to say that for every incident pair
of an $(i-2)$-face $F_{i-2}$ and an $(i+1)$-face $F_{i+1}$,
the section $F_{i+1}/F_{i-2}$ is a $p_i$-gon. This is the meaning we will focus on.

For the purposes of the article, we will use a modified Schl\"afli symbol.
It is like the standard symbol $\{p_1, \dotsc, p_{d-1}\}$ for a $d$-polytope $P$,
but possibly with some positions $p_i$ replaced by a stack of two distinct numbers, $\stak{p_i}{q_i}$.
Wherever a single number $p_j$ appears,
it means (as usual) that every section $F_{j+1}/F_{j-2}$ is a $p_j$-gon.
If two numbers $\stak{p_j}{q_j}$ appear, it means that all such sections
$F_{j+1}/F_{j-2}$ are either $p_j$-gons or $q_j$-gons.
If $P$ is a two-orbit polytope with such a symbol,
then the orbit of a flag $\Phi = \{F_{-1}, \dotsc, F_d\}$
is determined by whether $F_{j+1}/F_{j-2}$ is a $p_j$-gon
or a $q_j$-gon.
If it is a $p_j$-gon, and $P$ is of class $2_I$,
then the corresponding section of $\Phi^i$ is a $q_j$-gon precisely
when $i \notin I$.
In order for the section to have a different size,
$\Phi^i$ must differ from $\Phi$ in either the $(j+1)$-face
or the $(j-2)$-face---but by definition it differs in exactly the $i$-face.
We conclude that $(j + 1)$ or $(j-2)$ (or both) are not in $I$.

Beware that you cannot read off the symbols for sections from the symbol for $P$,
as you can with a standard Schl\"afli symbol, without additional information.
For instance, in the type $\{4,\stak 3 4, 4\}$ discussed below,
the facets are of type $\{4,\stak 3 4\}$ (the rhombic dodecahedron)
and the vertex figures are of type $\{\stak 3 4, 4\}$ (the cuboctahedron).
However, in the tetrahedral-octahedral tiling of type $\{3, \stak 3 4, 4\}$,
the vertex figures are cuboctahedra $\{\stak 3 4, 4\}$,
but the facets alternate between two types, tetrahedra $\{3, 3\}$ and octahedra $\{3, 4\}$.

Those polytopes with standard Schl\"afli symbols (with just one number in each position),
so that the size of every section $F_{j+1}/F_{j-2}$ depends only on $j$,
are called \emph{equivelar}.
Equivelar convex polytopes are combinatorially regular \cite[Theorem 1B9]{ARP}.
On the other hand, in a combinatorially two-orbit polytope,
obviously the sections $F_{j+1}/F_{j-2}$ for a given $j$ can have at most two sizes.
So every combinatorially two-orbit convex polytope has a modified Schl\"afli symbol,
with at least one stack of two numbers appearing.

\subsection{Results on combinatorially two-orbit polytopes}
\label{results}
For a $d$-polytope $P$ and $I \subseteq \{-1,0,\dotsc,d\}$,
a \emph{chain of type $I$} is a chain of faces in $\sL(P)$
with an $i$-face for each $i \in I$, and no others.
A \emph{chain of cotype $I$} is a chain in $\sL(P)$
with an $i$-face for each $i \notin I$, and no others.

\begin{lmm}\label{cochain-trans}
If $P$ is in class $2_I$ and $j \notin I$,
then $\Gamma(P)$ acts transitively on chains of cotype $\{j\}$.
\end{lmm}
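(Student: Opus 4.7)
The plan is to exploit the diamond condition together with the hypothesis $j \notin I$. A chain $C$ of cotype $\{j\}$ is precisely a flag with its $j$-face omitted; by the diamond condition there are exactly two flags of $P$ extending $C$, namely a $j$-adjacent pair $\Phi$ and $\Phi^j$ sharing the faces of $C$. Because $P$ is of class $2_I$ and $j \notin I$, the flags $\Phi$ and $\Phi^j$ lie in \emph{different} orbits of $\Gamma(P)$. Since $\Gamma(P)$ has exactly two flag orbits, it follows that each chain of cotype $\{j\}$ extends to exactly one flag in each of the two orbits.

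Given two chains $C_1, C_2$ of cotype $\{j\}$, I would then, for each $k \in \{1,2\}$, pick the unique flag $\Phi_k \supset C_k$ lying in a fixed flag orbit $\mathcal{O}$. Since $\Gamma(P)$ acts transitively on $\mathcal{O}$, there is an automorphism $\gamma \in \Gamma(P)$ with $\gamma \Phi_1 = \Phi_2$. Because automorphisms preserve rank, $\gamma$ carries the face of $\Phi_1$ of each dimension to the corresponding face of $\Phi_2$; discarding the $j$-faces on both sides yields $\gamma C_1 = C_2$, proving transitivity.

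I do not expect any serious obstacle. The lemma is essentially a formal consequence of the definitions: the class $2_I$ condition determines exactly how the two orbits distribute among $j$-adjacent flags, and the diamond condition guarantees that a cotype-$\{j\}$ chain uniquely pairs up two $j$-adjacent flag extensions. The only point worth stating carefully is that the two extensions of a cotype-$\{j\}$ chain are split one-to-each-orbit, since this is exactly what makes the orbit-respecting choice $\Phi_k \supset C_k$ possible.
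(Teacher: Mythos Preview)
Your proposal is correct and follows essentially the same argument as the paper: extend each cotype-$\{j\}$ chain to its two $j$-adjacent flag completions, observe that $j \notin I$ forces these to lie in different orbits, choose representatives in a common orbit, and apply the automorphism carrying one to the other. The paper's proof is the same idea, stated slightly more tersely.
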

\begin{proof}
Let $\Psi$ and $\Omega$ be two chains of cotype $\{j\}$.
Each of these may be extended to two flags of $P$
which, being $j$-adjacent, are in different flag orbits.
Thus, we extend $\Psi$ to a flag $\Psi'$
and $\Omega$ to a flag $\Omega'$ such that both are in the same orbit;
then the automorphism $\gamma \in \Gamma(P)$ carrying $\Psi'$ to $\Omega'$
also takes $\Psi$ to $\Omega$.
\end{proof}

\begin{crl}\label{singleval}
If $P$ is in class $2_I$ and $j \notin I$,
then $P$ has a modified Schl\"afli symbol
whose entry $p_i$ is single-valued
except possibly at $i=j-1$ and $i=j+2$.
\end{crl}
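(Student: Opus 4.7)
My plan is to reduce the corollary directly to Lemma~\ref{cochain-trans} by a chain-extension argument. Recall that the symbol entry $p_i$ records the size of sections of the form $F_{i+1}/F_{i-2}$ for incident pairs $F_{i-2} \subset F_{i+1}$ in $\sL(P)$. Since any automorphism of $P$ preserves the section at an incident pair of faces, it is enough to show that $\Gamma(P)$ acts transitively on all chains of type $\{i-2, i+1\}$ whenever $i \notin \{j-1, j+2\}$.

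So I would fix such an $i$ and two chains $\Psi = \{F_{i-2}, F_{i+1}\}$ and $\Omega = \{F'_{i-2}, F'_{i+1}\}$ in $\sL(P)$. The key observation is that the hypothesis $i \neq j-1$ and $i \neq j+2$ is equivalent to $j \notin \{i-2, i+1\}$, which is precisely the condition that $\Psi$ and $\Omega$ contain no $j$-face. Extending each chain to a full flag (possible for any chain, by the diamond condition) and then deleting the $j$-face produces chains $\Psi^*$ and $\Omega^*$ of cotype $\{j\}$ which contain $\Psi$ and $\Omega$, respectively.

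Now I would invoke Lemma~\ref{cochain-trans}: since $j \notin I$, there exists $\gamma \in \Gamma(P)$ carrying $\Psi^*$ to $\Omega^*$, and hence in particular sending $F_{i-2} \mapsto F'_{i-2}$ and $F_{i+1} \mapsto F'_{i+1}$. This $\gamma$ induces an isomorphism of the sections $F_{i+1}/F_{i-2}$ and $F'_{i+1}/F'_{i-2}$, so both are polygons of the same size, and $p_i$ is single-valued.

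There is no real obstacle here beyond the index bookkeeping; the reason the excluded positions $i = j-1$ and $i = j+2$ cannot be handled this way is that the relevant chain $\{F_{i-2}, F_{i+1}\}$ already contains a face of dimension $j$, so it cannot be extended to a chain of cotype $\{j\}$ and Lemma~\ref{cochain-trans} gives no information about its orbit.
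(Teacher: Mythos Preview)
Your argument is correct and is precisely the paper's approach, just spelled out in more detail: the paper's proof is the single sentence that $\Gamma(P)$ acts transitively on sections $F_{i+1}/F_{i-2}$ by Lemma~\ref{cochain-trans} unless $i+1=j$ or $i-2=j$, which is exactly the chain-extension reasoning you give.
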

\begin{proof}
By Lemma~\ref{cochain-trans}, $\Gamma(P)$ acts transitively on the
sections $F_{i+1}/F_{i-2}$ for each rank $i$ unless $i + 1 = j$ or $i - 2 = j$. 
\end{proof}

Recall that if all entries of the Schl\"afli symbol are single-valued,
then $P$ is combinatorially regular.
But by the Corollary,
if two distinct ranks $i < j$ are missing from $I$,
then all the entries would be single-valued
unless $j = i + 3$, so that $j - 1$ coincides with $i + 2$.
This also shows that no three ranks $i < j < k$ can be missing from $I$.

\begin{lmm}\label{evenval}
If a $d$-polytope $P$ is in class $2_I$ and $j \notin I$,
then the entries $p_j$ (if $j \geq 1$) and $p_{j+1}$ (if $j \leq d -2$)
are even.
\end{lmm}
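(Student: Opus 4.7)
The plan is to exploit the observation, established in the discussion after Corollary~\ref{singleval}, that at most two ranks can be missing from $I$, and that if two ranks are missing then they differ by exactly $3$. Since $j \notin I$ and $\abs{(j \pm 1) - j} = 1 \neq 3$, it follows that both $j-1$ and $j+1$ lie in $I$ whenever they are admissible ranks. This is the key combinatorial fact that powers the whole argument.

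To show $p_j$ is even when $j \geq 1$, I would examine the $2$-dimensional section $F_{j+1}/F_{j-2}$. By Corollary~\ref{singleval}, the entry $p_j$ is single-valued (since $j$ does not equal $j-1$, $j+2$, or either of the analogous shifted indices $j' - 1$, $j' + 2$ arising from a possible second missing rank $j' = j \pm 3$), so this section is a $p_j$-gon for every admissible choice of $F_{j-2}$ and $F_{j+1}$. Fix a flag $\Phi$ through these two faces: the $2p_j$ flags of $P$ agreeing with $\Phi$ outside the $(j-1)$- and $j$-faces are exactly the flags of this polygon, and they form a single cycle of length $2p_j$ under alternating $j$- and $(j-1)$-adjacencies. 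Each $j$-adjacency swaps flag orbits (since $j \notin I$), while each $(j-1)$-adjacency preserves them (since $j - 1 \in I$ by the observation above). Since we return to $\Phi$ after traversing the cycle, the total number of orbit-swaps, which equals $p_j$, must be even.

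The argument for $p_{j+1}$ when $j \leq d-2$ is entirely parallel: one looks at the section $F_{j+2}/F_{j-1}$, again single-valued of size $p_{j+1}$ by Corollary~\ref{singleval}, and walks around its $2p_{j+1}$ flags by alternating $j$- and $(j+1)$-adjacencies; the $j$-adjacencies swap orbits while the $(j+1)$-adjacencies preserve them (because $j+1 \in I$), forcing $p_{j+1}$ to be even. I do not anticipate any real obstacle: the content of the argument is simply that the two flag orbits of $P$ induce a proper $2$-coloring on a cyclic alternating walk around a polygonal section, which in turn forces the polygon to have an even number of vertices. The one piece of bookkeeping that must not be glossed over is the verification that $p_j$ and $p_{j+1}$ really are single-valued; this is purely a matter of checking that the indices $j$ and $j+1$ avoid the (at most four) stacked positions permitted by Corollary~\ref{singleval}, which is immediate.
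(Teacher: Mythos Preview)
Your proposal is correct and follows essentially the same argument as the paper: walk around the polygonal section $F_{j+1}/F_{j-2}$ (respectively $F_{j+2}/F_{j-1}$) via alternating $j$- and $(j-1)$-adjacencies (respectively $j$- and $(j+1)$-adjacencies), observe that $j$-adjacencies switch orbits while the other adjacency does not because $j \pm 1 \in I$, and conclude the cycle has even length. The paper's proof is a bit more terse---it phrases the key point as ``$(j-1)$ and $j$ do not differ by $3$'' rather than spelling out $j-1 \in I$ as you do---but the content is identical.
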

\begin{proof}
If $1 \leq j \leq d - 1$,
then consider any section $F_{j+1}/F_{j-2}$ with incident faces
of the indicated ranks.
This is a polygon whose edges correspond to $j$-faces of $P$.
A walk along the edges of this polygon
can be extended to a sequence of adjacent flags of $P$,
alternately $j$-adjacent and $(j-1)$-adjacent.
The flags change orbits whenever the $j$-face is changed.
But changing $(j-1)$-faces (corresponding to vertices of the polygon)
will not change the orbit, since $(j-1)$ and $j$ do not differ by 3.
Thus the polygon has evenly many sides.
Hence $p_j$, the $j$th entry in the Schl\"afli symbol for $P$
(which is single-valued by Corollary~\ref{singleval}) is even.

Similarly,
if $0 \leq j \leq d - 2$, then
any section $F_{j+2}/F_{j-1}$ is a polygon whose vertices correspond
to $j$-faces of $P$.
A walk along the edges of this polygon
corresponds to a sequence of adjacent flags of $P$,
alternately $j$-adjacent or $(j+1)$-adjacent,
with the $j$-adjacent flags in different orbits.
Hence the polygon again has evenly many sides, 
so $p_{j+1}$ is even.
\end{proof}

\begin{crl}\label{vertorfacet}
If a $d$-polytope $P$ is in class $2_I$ and $j \notin I$,
then $j = 0$ or $j = d - 1$.
\end{crl}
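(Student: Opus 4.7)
\medskip

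\noindent\textbf{Proof plan.}
The plan is to argue by contradiction. Suppose that some $j \notin I$ satisfies $1 \le j \le d-2$; I will extract a contradiction from the rank-$3$ section $S := F_{j+2}/F_{j-2}$, taken over any incident pair of faces of the indicated ranks (allowing the improper faces $F_{-1} = \emptyset$ or $F_d = P$ when $j = 1$ or $j = d-2$). Since sections of convex polytopes are again convex polytopes, $S$ is a convex $3$-polytope.

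First, I would show that $p_j$ and $p_{j+1}$ are single-valued in $P$. This is immediate from Corollary~\ref{singleval} applied to the given rank $j \notin I$: the only positions in the modified Schl\"afli symbol of $P$ that might carry two distinct values are $i = j-1$ and $i = j+2$. Neither of these equals $j$ or $j+1$, so those two entries are genuinely single-valued. A direct computation then identifies the Schl\"afli symbol of $S$ as $\{p_j, p_{j+1}\}$: a polygon section $F'_{j+1}/F_{j-2}$ (with $F'_{j+1} \le F_{j+2}$) has $p_j$ edges, and a polygon section $F_{j+2}/F'_{j-1}$ (with $F'_{j-1} \ge F_{j-2}$) has $p_{j+1}$ edges.

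Next, Lemma~\ref{evenval} applied to the same $j$ (using $1 \le j \le d-2$) forces both $p_j$ and $p_{j+1}$ to be even. Therefore $S$ is an equivelar convex $3$-polytope both of whose Schl\"afli entries are even; by \cite[Theorem 1B9]{ARP} it is combinatorially regular, hence isomorphic to a Platonic solid. But the Platonic Schl\"afli symbols---$\{3,3\}$, $\{3,4\}$, $\{4,3\}$, $\{3,5\}$, $\{5,3\}$---never have both entries even, giving the desired contradiction, and so $j$ must equal $0$ or $d-1$.

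The only delicate point is the boundary-case check that $S$ really is a $3$-polytope when $j = 1$ or $j = d-2$, where one of $F_{j-2}, F_{j+2}$ is improper; but the conventions $F_{-1} = \emptyset$ and $F_d = P$ make this immediate, and the section-is-a-polytope fact is part of the face-lattice setup already recalled in Section~\ref{prelim}. Beyond that, the argument is just a clean assembly of Corollary~\ref{singleval}, Lemma~\ref{evenval}, and the classification of regular convex $3$-polytopes, and I anticipate no serious obstacle.
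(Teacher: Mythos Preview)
Your proof is correct and follows essentially the same route as the paper: take the rank-$3$ section $F_{j+2}/F_{j-2}$, use Corollary~\ref{singleval} and Lemma~\ref{evenval} to see its Schl\"afli symbol is $\{p_j,p_{j+1}\}$ with both entries even, and conclude no convex $3$-polytope has such a symbol. The only cosmetic difference is in how the final contradiction is reached: the paper appeals directly to Euler's theorem (via \cite[\S 13.1]{grunbaum1967convex}), whereas you take the slightly longer detour through ``equivelar $\Rightarrow$ combinatorially regular $\Rightarrow$ Platonic'' and then inspect the list---both arguments are standard and yield the same conclusion.
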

\begin{proof}
If $j \notin I$ and $0 < j < d-1$,
then both the entries $p_j$ and $p_{j+1}$
appear in the Schl\"afli symbol.
But this contradicts Euler's theorem;
a polyhedral section $F_{j+2}/F_{j-2}$
would have the symbol $\{p_j,p_{j+1}\}$
with two even entries, which is impossible for convex polytopes
\cite[\S 13.1]{grunbaum1967convex}.
\end{proof}

Continuing the preceding remarks,
we conclude that the only way two distinct ranks can be
missing from $I$, where $P$ is in class $2_I$,
is if $I$ omits both 0 and $d-1$ and $d - 1 = 0 + 3$,
i.e.\@ $P$ must be a 4-polytope in class $2_{\{1,2\}}$.
We will postpone considering this special case until Section~\ref{4polys}.
Otherwise, $\abs{I} = d-1$ and any two-orbit polytope
of type $2_I$ must be either vertex-intransitive
or facet-intransitive. Since vertex-intransitive polytopes
are the duals of the facet-intransitive polytopes,
we will deal with the latter in Section~\ref{combtwo}.

\section{Combinatorially facet-intransitive two-orbit polytopes}\label{combtwo}
Suppose $P$ is a combinatorially two-orbit $d$-polytope
which is facet-intransitive, i.e.\@ it is in class $2_I$ where
$I = \{0, 1, \dotsc, d-2\}$. 
It follows that $P$ is what is called an \emph{alternating semiregular polytope}
in \cite{monson2012semiregular}.
Fix a flag $\Phi$, the \emph{base flag}.
Then for each $i \in I$, there is an automorphism $\rho_i \in \Gamma(P)$
such that $\rho_i(\Phi) = \Phi^i$.
There is no automorphism carrying $\Phi$ to $\Phi^{d-1}$, which is in the other orbit.
However, the flag $\Phi^{d-1,d-2,d-1}$, reached by changing the facet of $\Phi$,
then changing the ridge, then flipping facets again, is in the same orbit as $\Phi$,
so there is an automorphism $\rho'_{d-2}$ carrying $\Phi$ to $\Phi^{d-1,d-2,d-1}$.
This automorphism is referred to as $\alpha_{d-1,d-2,d-1}$ in \cite{hubard2010two}.

\begin{lmm}\label{generators}
The automorphisms $\rho_i$ and $\rho'_{d-2}$ generate the whole automorphism group of $P$,
so $\Gamma(P) = \grpgen{\rho_0, \rho_1, \dotsc, \rho_{d-2}, \rho'_{d-2}}$.
\end{lmm}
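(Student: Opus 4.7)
The approach is to show that the subgroup $H := \grpgen{\rho_0, \dotsc, \rho_{d-2}, \rho'_{d-2}}$ already acts transitively on the $\Gamma(P)$-orbit $\mathcal{O}$ of the base flag $\Phi$. This suffices because $\Gamma(P)$ acts freely on the flags of any abstract polytope: if $\gamma \in \Gamma(P)$ fixes $\Phi$, then by the diamond condition $\gamma$ must fix each $\Phi^i$ as well (the $i$-face of $\Phi^i$ being uniquely pinned down as the other $i$-face between the fixed $(i-1)$- and $(i+1)$-faces of $\Phi$), and flag-connectivity then forces $\gamma = \id$. Consequently $\gamma \mapsto \gamma(\Phi)$ is a bijection $\Gamma(P) \to \mathcal{O}$, and establishing $H \cdot \Phi = \mathcal{O}$ yields $H = \Gamma(P)$.

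Because each generator is an automorphism and hence preserves adjacency, the word $\sigma_1 \sigma_2 \cdots \sigma_n$ sends $\Phi$ to $\Phi^{s_1, s_2, \ldots, s_n}$, where $s_k$ is the adjacency pattern of $\sigma_k$: $s_k = (i)$ when $\sigma_k = \rho_i$, and $s_k = (d-1, d-2, d-1)$ when $\sigma_k = \rho'_{d-2}$. Thus $H \cdot \Phi$ coincides with the set $S$ of flags reachable from $\Phi$ by sequences of two move types: a single $i$-adjacency for $i \in I = \{0, \dotsc, d-2\}$, or the three-step move $\Psi \mapsto \Psi^{d-1, d-2, d-1}$. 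Both moves preserve $\mathcal{O}$, so $S \subseteq \mathcal{O}$. To prove $S = \mathcal{O}$, I set $T := S \cup S^{d-1}$, where $S^{d-1}$ is the set of $(d-1)$-neighbours of flags in $S$, and verify that $T$ is closed under \emph{every} single flag-adjacency; by flag-connectivity of $P$, this forces $T$ to equal the entire flag set, whence $S = T \cap \mathcal{O} = \mathcal{O}$. Closure on $S$ is immediate from the definitions. For $\Psi = \Omega^{d-1} \in S^{d-1}$ with $\Omega \in S$, the $(d-1)$-adjacency returns $\Omega \in S$; and for $i \leq d-3$, since $\abs{i - (d-1)} \geq 2$ the commutation of non-adjacent flag-changes gives $\Psi^i = (\Omega^i)^{d-1} \in S^{d-1}$.

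The decisive case of the closure argument is $i = d-2$, and this is precisely where the generator $\rho'_{d-2}$ is needed: among the ranks other than $d-1$, only $d-2$ fails to commute with $d-1$, so without a three-step move there would be no way to process a $(d-2)$-adjacency on $\Omega^{d-1}$ and remain in $T$. With the three-step move available, $\Psi^{d-2} = \Omega^{d-1, d-2} = (\Omega^{d-1, d-2, d-1})^{d-1}$ lies in $S^{d-1}$ because $\Omega^{d-1, d-2, d-1} \in S$, completing the closure and hence the proof.
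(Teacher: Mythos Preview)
Your proof is correct and follows essentially the same approach as the paper: both arguments use flag-connectivity to show that every flag lies in $\{\sigma(\Phi),\sigma(\Phi^{d-1})\}$ for some $\sigma$ in the subgroup, handling the adjacency cases $i\le d-3$, $i=d-2$, and $i=d-1$ separately, with $\rho'_{d-2}$ supplying the crucial $(d-2)$-step on the $\Phi^{d-1}$ side. Your packaging via the closed set $T=S\cup S^{d-1}$ is a clean set-theoretic reformulation of the paper's explicit inductive walk along a flag-path, but the underlying idea is identical.
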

\begin{proof}
Write $\Phi = \{F_{-1}, F_0, \dotsc, F_{d-1}, P\}$,
and say the facet-adjacent flag $\Phi^{d-1}$
has the facet $F_{d-1}'$.
First we show that the given generators suffice
to carry the flag $\Phi^{d-1}$ to each of its adjacent flags
$\Phi^{d-1,i}$ for $i \leq d- 2$.

Let $i \leq d - 3$.
Since $\rho_i$ fixes $F_{d-2}$ and $F_{d-1}$, it must also fix $F_{d-1}'$.
Hence, it fixes all faces of $\Phi^{d-1}$ except for its $i$-face $F_i$;
so $\rho_i(\Phi^{d-1}) = \Phi^{d-1,i}$.

On the other hand, $\rho_{d-2}$ cannot fix $F_{d-1}'$.
Since $\rho_{d-2}(\Phi) = \Phi^{d-2}$,
the image of the $(d-1)$-adjacent flag $\Phi^{d-1}$
must be $(d-1)$-adjacent to $\Phi^{d-2}$,
i.e.\@ $\rho_{d-2}(\Phi^{d-1}) = \Phi^{d-2,d-1}$.
But the automorphism $\rho'_{d-2}$
which carries $\Phi$ to $\Phi^{d-1,d-2,d-1}$
must carry $\Phi^{d-1}$ to $\Phi^{d-1,d-2}$.

Thus, the given generators carry $\Phi^{d-1}$ to each of
its adjacent flags except for $\Phi$.

Now let $\gamma$ be any automorphism of $P$.
The automorphism $\gamma$ is the unique one carrying
$\Phi$ to $\gamma(\Phi)$.
By exhibiting an automorphism in $\grpgen{\rho_0, \rho_1, \dotsc, \rho_{d-2}, \rho'_{d-2}}$ carrying $\Phi$ to $\gamma(\Phi)$,
we show that the arbitrary element $\gamma$ lies in this subgroup.

By the flag-connectedness property of polytopes,
there is a sequence of adjacent flags
$\Phi = \Phi_0, \Phi_1, \dotsc, \Phi_n = \gamma(\Phi)$.
For each $k \in \{1,\dotsc,n\}$ there is some $i_k \in \{0,\dotsc, d-1\}$
such that the flag $\Phi_k$ is $i_k$-adjacent to the preceding flag $\Phi_{k-1}$.
Suppose $1 \leq k \leq n$ and we have written either $\Phi_{k-1} = \sigma(\Phi)$
or $\Phi_{k-1} = \sigma(\Phi^{d-1})$ for some $\sigma \in \grpgen{\rho_0, \rho_1, \dotsc, \rho_{d-2}, \rho'_{d-2}}$.

If $i_k \leq d - 3$, then $\Phi_k$ is
$\sigma(\rho_{i_k}(\Phi))$ or $\sigma(\rho_{i_k}(\Phi^{d-1}))$, respectively.

If $i_k = d - 2$, then $\Phi_k$ is
$\sigma(\rho_{d-2}(\Phi))$ or $\sigma(\rho'_{d-2}(\Phi^{d-1}))$, respectively.

If $i_k = d - 1$, then $\Phi_k$ is
$\sigma(\Phi^{d-1})$ or $\sigma(\Phi)$, respectively.

Thus we continue until we have written
$\Phi_n = \sigma(\Phi)$ or $\sigma(\Phi^{d-1})$ for some
$\sigma \in \grpgen{\rho_0, \rho_1, \dotsc, \rho_{d-2}, \rho'_{d-2}}$.
Since $\Phi_n = \gamma(\Phi)$ is in the same orbit as $\Phi$
and $\Phi^{d-1}$ is not,
we must in fact have $\Phi_n = \sigma(\Phi)$ and $\gamma = \sigma$.
\end{proof}

By Corollary~\ref{singleval} with $j = d - 1$,
$P$ will have a modified Schl\"afli symbol of the form $\{p_1,\dotsc,p_{d-3},\stak{p_{d-2}}{q_{d-2}},p_{d-1}\}$,
where $p_{d-2} \neq q_{d-2}$, since $P$ cannot be equivelar.
Figure~\ref{coxdiagram} shows the Coxeter diagram for these generators,
modified by labeling the nodes with the corresponding generator.
Such a diagram is dubbed a ``tail-triangle diagram'' in \cite{monson2012semiregular},
making $\Gamma(P)$ a ``tail-triangle group.''
Note that $p_{d-1}$ must be even, by Lemma~\ref{evenval}.

\begin{figure}[h]
\begin{tikzpicture}[xscale=1.2]
\draw (0,0) node[fill,circle,inner sep=1.5pt,label=below:$\scriptstyle\rho_0$] {} -- node[above] {$p_1$}
	  (1.3,0) node[fill,circle,inner sep=1.5pt,label=below:$\scriptstyle\rho_1$] (r1) {}
	  (3,0)  node[fill,circle,inner sep=1.5pt,label=below:$\scriptstyle\rho_{d-4}$] (rn) {} -- node[above] {$p_{d-3}$}
	  (4.3,0) node[fill,circle,inner sep=1.5pt,label=below:$\scriptstyle\rho_{d-3}$] (fr) {} -- node[above=2pt] {$p_{d-2}$}
	  (5.6,.9) node[fill,circle,inner sep=1.5pt,label=right:$\scriptstyle\rho_{d-2}$] (tp) {};
\draw (fr) -- node[below] {$q_{d-2}$}
      (5.6,-.9) node[fill,circle,inner sep=1.5pt,label=right:$\scriptstyle\rho'_{d-2}$] (bt) {};
\draw (bt) -- node[right] {$\frac{p_{d-1}}{2}$} (tp);
\draw[very thick,loosely dotted] (r1) -- (rn);
\end{tikzpicture}
\caption{The Coxeter diagram of a facet-intransitive two-orbit polytope}\label{coxdiagram}
\end{figure}

Since the generators of $\Gamma(P)$ satisfy all the Coxeter relations implied by the diagram,
$\Gamma(P)$ is a quotient of the Coxeter group associated with the diagram.
However, in principle the generators of $\Gamma(P)$ might satisfy additional relations.
We shall show that, in fact, there are no additional relations in $\Gamma(P)$,
so that $\Gamma(P)$ is exactly the Coxeter group associated with the diagram
in Figure~\ref{coxdiagram}. Since $\Gamma(P)$ is finite,
we can then have recourse to the classification of finite Coxeter groups.

\begin{lmm}\label{Coxgroup}
The automorphism group $\Gamma(P)$ is a Coxeter group, with
Coxeter diagram as in Figure~\ref{coxdiagram}.
\end{lmm}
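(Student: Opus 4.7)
The plan is to produce a surjection $\phi : W \to \Gamma(P)$ from the abstract Coxeter group $W$ defined by Figure~\ref{coxdiagram}, and then to show $\phi$ is injective. Surjectivity amounts to verifying that $\rho_0, \dotsc, \rho_{d-2}, \rho'_{d-2}$ satisfy every relation read off the diagram, since Lemma~\ref{generators} already provides the generators. The involutivity $\rho_i^2 = 1$ is immediate, because an automorphism of $P$ is determined by its image of $\Phi$. The commutation $(\rho_i \rho_j)^2 = 1$ for nodes at graph-distance at least two in the string part follows from $\Phi^{i,j} = \Phi^{j,i}$ (the diamond condition). The power relation $(\rho_{i-1}\rho_i)^{p_i} = 1$ for $1 \leq i \leq d-3$ records that the section $F_{i+1}/F_{i-2}$ is a $p_i$-gon, well-defined by Corollary~\ref{singleval}. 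The two tail edges $(\rho_{d-3}\rho_{d-2})^{p_{d-2}} = 1$ and $(\rho_{d-3}\rho'_{d-2})^{q_{d-2}} = 1$ encode the two combinatorial types of sections $F_{d-1}/F_{d-4}$ attached to the base flag $\Phi$ and to $\Phi^{d-1}$ respectively. Finally, $(\rho_{d-2}\rho'_{d-2})^{p_{d-1}/2} = 1$ is verified by traversing the $p_{d-1}$-gon $F_d/F_{d-3}$: each application of $\rho_{d-2}\rho'_{d-2}$ advances two edges of the polygon (one in each flag orbit), and $p_{d-1}$ is even by Lemma~\ref{evenval}.

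The principal obstacle is injectivity: showing there are no further, hidden relations. My approach is to exploit the parabolic subgroup structure on both sides. By Lemma~\ref{cochain-trans} applied with $j = d-1$, the group $\Gamma(P)$ acts transitively on chains of cotype $\{d-1\}$, so each orbit of facets consists of combinatorially regular polytopes. Fixing the base facet $F_{d-1}$, its stabilizer $\langle \rho_0, \dotsc, \rho_{d-2}\rangle$ acts simply transitively on the flags of $F_{d-1}$, identifying this subgroup as the string Coxeter group with Schl\"afli symbol $\{p_1,\dotsc,p_{d-2}\}$. Symmetrically, $\langle \rho_0, \dotsc, \rho_{d-3}, \rho'_{d-2}\rangle$ is the string Coxeter group of type $\{p_1,\dotsc,p_{d-3},q_{d-2}\}$ associated with the other facet class, and the two meet in the ridge stabilizer $\langle \rho_0, \dotsc, \rho_{d-3}\rangle$, of type $\{p_1,\dotsc,p_{d-3}\}$.

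In the ambient group $W$, the parabolic subgroups generated by these same subsets of nodes are, by standard Coxeter theory, the string Coxeter groups of exactly the same Schl\"afli types, meeting in the same smaller string parabolic. I would then enumerate the cosets of $\langle r_0,\dotsc,r_{d-2}\rangle$ in $W$, using the tail-triangle relations at the terminal node to put them in bijection with facets of $P$ of the same combinatorial type as $F_{d-1}$; matching this against the known facet count in $\Gamma(P)$ forces $|W| = |\Gamma(P)|$ and hence $\phi$ is an isomorphism. This coset analysis is essentially the intersection-property argument for tail-triangle groups developed in \cite{monson2012semiregular}, which I expect to invoke directly once the parabolic identifications above are established.
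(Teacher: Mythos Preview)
Your surjection step is fine: the relations in Figure~\ref{coxdiagram} do hold in $\Gamma(P)$, and Lemma~\ref{generators} gives you the generators, so the canonical map $\phi\colon W \to \Gamma(P)$ exists and is onto.

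The gap is in your injectivity argument. Knowing that $\phi$ restricts to an isomorphism on each of the three maximal parabolics does \emph{not} force $\phi$ to be injective globally. A quotient of $W$ can have all its maximal parabolics faithfully embedded while still being a proper quotient; the kernel need only avoid each parabolic. Your final step---``enumerate the cosets of $\langle r_0,\dotsc,r_{d-2}\rangle$ in $W$ \dots\ to put them in bijection with facets of $P$''---is exactly the statement that $[W : W_I] = [\Gamma(P) : \Gamma_{F_{d-1}}]$, which is equivalent to $|W| = |\Gamma(P)|$, which is what you are trying to prove. You have not supplied an independent upper bound on $[W:W_I]$, and the reference to \cite{monson2012semiregular} does not provide one: their intersection property concerns how parabolics of $W$ sit inside $W$, and their construction goes from a tail-triangle group to a \emph{universal} semiregular polytope $\mathcal{P}(W)$. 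To use it you would need to show $P \cong \mathcal{P}(W)$, and the obstruction to that is precisely whether $\ker\phi$ is trivial.

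The paper closes this gap by a topological argument rather than a counting one. One takes the barycentric subdivision of $\partial P$, glues $(d-1)$-adjacent chambers in pairs to get a fundamental region for $\Gamma(P)$, and associates to each word $w$ in the generators a path through the resulting wall system. A word representing $1$ gives a closed loop on $\partial P \cong S^{d-1}$; contracting this loop across codimension-two faces uses only the Coxeter relations $(\rho_i\rho_j)^{m_{ij}} = 1$, so every relation in $\Gamma(P)$ is a consequence of those. The essential input your argument is missing is the simple connectedness of $\partial P$; without it (or an equivalent finiteness statement) there is no a~priori reason $W$ should be finite at all, let alone equal to $\Gamma(P)$.
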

The proof is a modification of that of \cite[Theorem 3A1]{McMThesis},
that a combinatorially regular convex polytope is isomorphic to a regular one. 
The method is also in Coxeter's proof \cite[\S 5.3]{coxeter1973regular} that the Coxeter relations fully define the group generated
by reflections in the walls of the fundamental region described by the diagram.
The essence is that any relation in the group
(i.e.\@ a word in the generators representing the identity) can be represented as
a loop in the boundary of the polytope $P$;
contracting this loop to a point gives a guide to reducing the word,
using the given relations, until it is empty.
This shows that every relation in the group is a consequence
of the Coxeter relations.
The following proof is modeled on, and sometimes verbatim from, \cite[\S 5.3]{coxeter1973regular}.

\begin{proof}
We associate flags of $P$ with chambers of a 
``barycentric subdivision'' $\sB$ of the boundary of $P$.
Each flag $\Omega = \{G_{-1}, G_0, \dotsc, G_{d-1}, G_d\}$
is associated to the $(d-1)$-simplex whose vertices
are ``barycenters'' of each proper face of $\Omega$.
These barycenters can be any preassigned points in the relative interior of each face of $P$.
So the vertices of the simplex for $\Omega$
are the vertex $G_0$, the midpoint (say) of the edge $G_1$, and so on
up an interior point of the facet $G_{d-1}$.
Each face of this simplex corresponds to a subchain of $\Omega$.
A facet of the simplex is a $(d-2)$-simplex involving
the centers of all but one of the proper faces in $\Omega$.
Say the missing face is $G_i$. Then the facet,
called the \emph{$i$th wall}, forms the boundary
between the simplex for $\Omega$ and the simplex for the $i$-adjacent flag
$\Omega^i$. We identify each flag with its corresponding chamber
in the boundary of $P$.

The union of the chambers $\Phi$ and $\Phi^{d-1}$ constitute
a ``fundamental region'' $R$ for $\Gamma(P)$,
since every flag is the image of one of these.
For $0 \leq i \leq d - 3$,
the $i$th wall of $\Phi$ and the $i$th wall of $\Phi^{d-1}$ are contiguous,
and we will call their union the $i$th wall of $R$.
The $(d-2)$th wall of $\Phi$ is called the $(d-2)$th wall of $R$,
and the $(d-2)$th wall of $\Phi^{d-1}$ is called the $z$th wall of $R$
($z$ is just a symbol distinct from $0,\dotsc,d-1$.)
The $(d-1)$th walls of $\Phi$ and $\Phi^{d-1}$ are in the interior of $R$
and are not walls of $R$.
Thus, $R$ has walls labeled $0, \dotsc, d-2, z$.

Say the vertex of $\sB$ lying in the relative interior of a face $F_i$
of $\Phi$ is $C_i$.
Recall that $F'_{d-1}$ is the facet in $\Phi^{d-1}$; say
the vertex in $\relint(F'_{d-1})$ is $C'_{d-1}$.
Then $R$ contains the $d$ vertices $C_i$, plus $C'_{d-1}$,
but $C_{d-2}$ is on the ``edge'' from $C'_{d-1}$ to $C_{d-1}$
and is not a vertex of $R$, so that $R$ has $d$ vertices
and is again a simplex, with vertices $C_0, \dotsc, C_{d-3}, C_{d-1}, C'_{d-1}$.
See Figure~\ref{fig:glued}.
(Some facets may be skew, rather than linear.)
In the left figure, where $d = 3$, the $i$th wall is labeled $i$.
In the right figure, where $d = 4$,
the face $C_0 C_1 C'_3$ is the $z$th wall;
the face $C_0 C_1 C_3$ is the 2nd wall;
the face $C_0 C'_3 C_3$ is the 1st wall;
and the face $C_1 C'_3 C_3$ is the 0th wall.

\begin{figure}[h]
\begin{tikzpicture}[scale=1.5]
\draw (0,0) node[left] {$C_0$} -- node[below left] {1} 
     (1,-1) node[right] {$C_2$} -- 
     (1,0) node[right] {$C_1$} -- 
     (1,1) node[right] {$C'_2$} -- node[above left] {$z$} (0,0);
\draw[dashed] (0,0) -- (1,0);
\draw[decorate,decoration={brace,amplitude=6pt}] (1.4,.9) -- node[right=3pt]{0} (1.4,-.9);
\end{tikzpicture}
\hfill
\begin{tikzpicture}[scale=2]
\draw (0,1,-1) node[above] {$C_0$} -- 
     (1.5,0,0) node[right] {$C_3$} -- 
     (0,0,-1) node[above right] {$C_1$} -- 
     (-1.5,0,0) node[left] {$C'_3$} -- (0,1,-1) -- (0,0,-1);
\draw (-1.5,0,0) -- (0,0,0) node[below] {$C_2$} -- (1.5,0,0);
\draw[dashed] (0,1,-1) -- (0,0,0) -- (0,0,-1);
\end{tikzpicture}
\caption{The region $R$ composed of $(d-1)$-adjacent chambers, in the case $d = 3$ (left) or $d = 4$ (right)}\label{fig:glued}
\end{figure}

Now for $\gamma \in \Gamma(P)$,
the chambers for $\gamma(\Phi)$ and $\gamma(\Phi^{d-1})$ are adjacent,
and their union is called ``region $\gamma$.''
We pass through the $i$th wall of region $\gamma$
into region $\gamma \rho_i$ (for $i \in \{0,\dotsc,d-2,z\}$),
where $\rho_z$ denotes $\rho'_{d-2}$.
Each automorphism $\gamma$ carries $i$-faces to other $i$-faces,
and the two orbits of $(d-1)$-faces are carried only to themselves.
Although $\gamma$ does not actually map points to other points,
if we consider a vertex of $\sB$ as representing the face in whose relative interior
it lies, it makes sense to say that each vertex of $R$ is carried only to
the unique vertex of the same type in each region $\gamma$.

To a word $w = \rho_{i_1}\dots\rho_{i_k}$,
where $i_j \in \{0,\dotsc,d-2,z\}$, 
we associate a path from $R$ to region $\rho_{i_1}\dots\rho_{i_k}$
passing through the $i_1$th wall of $R$,
then the $i_2$th wall of region $\rho_{i_1}$, and so on.
(By a path we mean a continuous curve which avoids any $(d-3)$-face of $\sB$.)

If the word $w$ represents the identity,
we must show that the relation $w = 1$ is a consequence of the Coxeter relations
inherent in Figure~\ref{coxdiagram}.
These relations are $(\rho_i \rho_j)^{m_{ij}} = 1$,
where $m_{ii} = 1$ for all $i$, and otherwise $m_{ij}$ is the label
on the edge from $\rho_i$ to $\rho_j$, or 2 if there is no edge.
If $w = 1$, the path associated to $w$ is a closed path back to $R$.
Consider what happens to the expression $\rho_{i_1}\dots\rho_{i_k}$
as the closed path is gradually shrunk until it lies wholly within region $R$.
Whenever the path goes from one region to another and then immediately returns,
this detour may be eliminated by canceling a repeated $\rho_i$ in the expression,
in accordance with the relation $(\rho_i)^2 = 1$.
The only other kind of change that can occur during the shrinking process
is when the path momentarily crosses a $(d-3)$-face $F$.

If $F$ is the intersection of the $i$th and $j$th walls of one region,
so that it does not contains vertices of the types opposite the $i$th and $j$th walls,
then $F$ does not contain vertices of those types in any region that contains it.
So the walls containing $F$ alternate between $i$th walls and $j$th walls,
and $F$ is contained in $2m_{ij}$ regions.

This change will replace $\rho_i \rho_j \rho_i \dotsm$ by $\rho_j \rho_i \rho_j \dotsm$ (or vice versa) in accordance with the relation $(\rho_i \rho_j)^{m_{ij}} = 1$.
The shrinkage of the path thus corresponds to an algebraic reduction of the expression $w$ by means of the Coxeter relations.
Since the boundary of $P$ is topologically a $(d-1)$-sphere,
and simply connected if $d > 2$, we can shrink the path to a point.
It follows that every relation in $\Gamma(P)$ is a Coxeter relation.
\end{proof}

We can now complete the proof of 
\begin{thm}\label{Comb2isGeo2}
Any combinatorially two-orbit facet-intransitive convex polytope
is isomorphic to a two-orbit convex polytope.
\end{thm}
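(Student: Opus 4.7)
The plan is to realize the abstract automorphism group $W = \Gamma(P)$, shown in Lemma~\ref{Coxgroup} to be a finite Coxeter group with the tail-triangle diagram of Figure~\ref{coxdiagram}, as a concrete Euclidean reflection group, and then build a geometric polytope $P'$ from it by Wythoff's construction. Since $P$ is convex, $\Gamma(P)$ is finite, so $W$ is a finite Coxeter group and may be realized as a reflection group on $\R^d$; the $d$ generators $\rho_0,\dotsc,\rho_{d-2},\rho'_{d-2}$ act as reflections in hyperplanes $H_0,\dotsc,H_{d-2},H_z$ bounding a simplicial fundamental cone, with dihedral angles prescribed by the labels in the diagram.

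Next I would apply Wythoff's construction. Choose a nonzero point $v_0$ on the one-dimensional ray $H_1 \cap H_2 \cap \dotsb \cap H_{d-2} \cap H_z$ in the closed fundamental chamber; this ray is $1$-dimensional because its stabilizer in $W$ is the rank $d-1$ parabolic subgroup $W_0 = \grpgen{\rho_1,\dotsc,\rho_{d-2},\rho'_{d-2}}$. Set $P' := \conv(W \cdot v_0)$; this is a convex $d$-polytope whose Euclidean symmetry group contains $W$. Standard Wythoff theory describes $\sL(P')$ via cosets of parabolic subgroups of $W$: each $i$-face of $P'$ is labelled by a coset $\gamma W'_i$ for a specific parabolic subgroup $W'_i$, and incidence corresponds to reverse coset inclusion.

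The main step is to match $\sL(P)$ with $\sL(P')$. Because $P$ is in class $2_{\{0,\dotsc,d-2\}}$, $W$ acts transitively on $i$-faces for $i \le d-2$ and with two orbits on facets. Using the facts that $\rho_j$ fixes every face of $\Phi$ except $F_j$ and that $\rho'_{d-2}$ fixes every face of $\Phi$ except $F_{d-2}$ and $F_{d-1}$, the stabilizer in $W$ of each face of $\Phi$ (and of the distinguished facet $F'_{d-1}$ in $\Phi^{d-1}$) can be written down explicitly as a parabolic subgroup; these stabilizers coincide with the subgroups $W'_i$ appearing in the Wythoff description of $P'$. The map sending $\gamma(F_i) \in \sL(P)$ to the face of $P'$ labelled by $\gamma W'_i$ is then an inclusion-preserving bijection $\sL(P) \to \sL(P')$. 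Finally, since $W \le G(P')$ acts on flags of $P'$ with exactly two orbits (via this isomorphism, as $P$ is not combinatorially regular), and since $G(P') \le \Gamma(P') \cong W$, we conclude $G(P') = W$, so $P'$ is a geometrically two-orbit polytope isomorphic to $P$.

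The hard part is the bookkeeping in the third paragraph: confirming that the coset labelling of $\sL(P)$ induced by the $W$-action on flags agrees with the Wythoff labelling of $\sL(P')$, especially at the ``split'' ranks $d-2$ and $d-1$ where the tail-triangle diagram branches. This is essentially the tail-triangle adaptation of the argument in \cite[Theorem 3A1]{McMThesis}, closely paralleling the treatment of alternating semiregular polytopes in \cite{monson2012semiregular}; full-dimensionality and convexity of $P'$ follow from $v_0 \ne 0$ together with the fact that a finite irreducible reflection group has no nonzero globally fixed vector (reducible cases reduce to lower dimension and can be checked directly against the classification).
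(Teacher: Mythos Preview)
Your approach is correct in outline but takes a genuinely different route from the paper's. The paper does not attempt a direct Wythoff realization at all. Instead, having established in Lemma~\ref{Coxgroup} that $\Gamma(P)$ is the Coxeter group with the tail-triangle diagram of Figure~\ref{coxdiagram}, it immediately invokes the classification of finite Coxeter groups: since no finite Coxeter diagram contains a cycle, $p_{d-1}/2 = 2$; and since the only branching finite diagram with $\geq 4$ nodes is $D_n$ (all labels $3$), the inequality $p_{d-2}\neq q_{d-2}$ forces $d=3$. This leaves only the two diagrams $B_3$ and $H_3$, which are already realized by the cuboctahedron and icosidodecahedron, and Lemma~\ref{SameGpSameTope} (uniqueness of a two-orbit facet-intransitive polytope with a given Coxeter presentation) finishes the proof. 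Thus the paper's argument is a short classification-plus-uniqueness check, with no construction.

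Your constructive approach buys generality (the same Wythoff template would handle the affine case for tilings without a separate argument) and avoids explicitly listing finite Coxeter groups, but it front-loads exactly the ``bookkeeping'' you flag: one must verify that the convex orbit polytope $\conv(W\!\cdot v_0)$ really has its face lattice described by the expected parabolic cosets, and that the stabilizers $\mathrm{Stab}_W(F_i)$ in $P$ are \emph{exactly} the parabolic subgroups $W'_i$ (containment is obvious, equality is the content). This is where the paper's route is cheaper: rather than proving that abstract coset description for all ranks of a tail-triangle Wythoffian, it reduces to two concrete $3$-polytopes and appeals to Lemma~\ref{SameGpSameTope}, whose proof is a two-line bijection of flags. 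If you want to streamline your write-up, you could replace your third paragraph entirely by: build $P'$ via Wythoff, observe it is combinatorially two-orbit facet-intransitive with the same diagram as $P$, and invoke Lemma~\ref{SameGpSameTope}; that sidesteps the coset-matching at individual ranks. Also note that your hedge about reducible $W$ is unnecessary here: for $d\geq 3$ the tail-triangle diagram is always connected (since $p_i\geq 3$ for $i\leq d-2$ and $q_{d-2}\geq 3$), so $W$ is irreducible.
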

Since $P$ has finitely many flags,
we know that $\Gamma(P)$ is a finite Coxeter group.
Consulting the list of finite Coxeter groups,
we see that $p_{d-1}/2$ must be 2, since no loops appear
in diagrams of finite Coxeter groups.
Furthermore, the only diagram with four or more nodes
that branches as in Figure~\ref{coxdiagram}
is $D_n$, where every edge has the label 3.
But we must have $p_{d-2} \neq q_{d-2}$, since $P$
is not equivelar.
Hence the diagram must not have a ``tail'':
we must have $d = 3$, and the only admissible diagrams
of finite Coxeter groups are those in Figure~\ref{gooddiagrams}.

\begin{figure}[h]
\begin{tikzpicture}[xscale=1.2]
\draw (0,0) node[fill,circle,inner sep=1.5pt,label=below:$\scriptstyle\rho_0$] (fr) {} -- node[above=2pt] {}
	  (1,1) node[fill,circle,inner sep=1.5pt,label=right:$\scriptstyle\rho_1$] (tp) {};
\draw (fr) -- node[below] {$4$}
      (1,-1) node[fill,circle,inner sep=1.5pt,label=right:$\scriptstyle\rho'_1$] (bt) {};
\node at (.5,-1.5) {$B_3 = C_3$};
\end{tikzpicture}
\hspace{1cm}
\begin{tikzpicture}[xscale=1.2]
\draw (0,0) node[fill,circle,inner sep=1.5pt,label=below:$\scriptstyle\rho_0$] (fr) {} -- node[above=2pt] {}
	  (1,1) node[fill,circle,inner sep=1.5pt,label=right:$\scriptstyle\rho_1$] (tp) {};
\draw (fr) -- node[below] {$5$}
      (1,-1) node[fill,circle,inner sep=1.5pt,label=right:$\scriptstyle\rho'_1$] (bt) {};
\node at (.5,-1.5) {$H_3$};
\end{tikzpicture}
\caption{Potential Coxeter diagrams for the automorphism group of a two-orbit polytope}\label{gooddiagrams}
\end{figure}

We know that both of these Coxeter groups occur as the automorphism group
of a two-orbit facet-intransitive polyhedron: $B_3$ for the cuboctahedron,
and $H_3$ for the icosidodecahedron.
The next lemma will show that the isomorphism type of a
2-orbit facet-intransitive polytope
is determined by its automorphism group (as a Coxeter system), so
these are the only possibilities.

For the purposes of the Lemma, we will fix a canonical
form of the Coxeter group presentation, as encoded in the
diagram of Figure~\ref{coxdiagram} or the Schl\"afli symbol
$\{p_1,\dotsc,p_{d-3},\stak{p_{d-2}}{q_{d-2}},p_{d-1}\}$,
such that $p_{d-2} < q_{d-2}$.
A flag $\Phi$ will be said to be an \emph{appropriate base flag}
if the generators $\rho_i$ corresponding to $\Phi$,
defined as in Lemma~\ref{generators},
satisfy $(\rho_{d-3}\rho_{d-2})^{p_{d-2}} = 1$.
We prove the Lemma generally,
rather than restricting to the two presentations in Figure~\ref{gooddiagrams},
so that it also applies to tilings, or indeed, any abstract polytopes.

\begin{lmm}\label{SameGpSameTope}
Two combinatorially two-orbit facet-intransitive polytopes
$P_1$ and $P_2$ are isomorphic if and only if their automorphism
groups have the same presentation
(with generators as in Lemma~\ref{generators},
and Coxeter relations as depicted in Figure~\ref{coxdiagram}),
if we require $p_{d-2} < q_{d-2}$.
\end{lmm}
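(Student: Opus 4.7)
The forward direction is immediate: an isomorphism $\phi\colon P_1 \to P_2$ induces an isomorphism of automorphism groups, and by mapping the base flag and its facet-adjacent flag across $\phi$, we obtain a choice of base flag for $P_2$ whose generators $\rho_i, \rho'_{d-2}$ are the images of those of $P_1$. The Coxeter presentations then coincide, and the convention $p_{d-2}<q_{d-2}$ resolves the only remaining ambiguity: whether to call a particular generator $\rho_{d-2}$ or $\rho'_{d-2}$. Interchanging the roles of the two amounts to swapping $p_{d-2}$ with $q_{d-2}$, and an appropriate choice of base flag always achieves the inequality.

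For the converse, my plan is to recover the combinatorial structure of each $P_k$ from its presentation. Since $\Gamma(P_k)$ acts with two orbits on flags, the flags of $P_k$ are in bijection with $\Gamma(P_k) \times \{0,1\}$ via $(\gamma, 0) \leftrightarrow \gamma(\Phi)$ and $(\gamma, 1) \leftrightarrow \gamma(\Phi^{d-1})$. Arguing as inside the proof of Lemma~\ref{generators}, I would check that the $i$-adjacent flag of $(\gamma, 0)$ is $(\gamma\rho_i, 0)$ for $0 \le i \le d-2$ and is $(\gamma, 1)$ for $i = d-1$; while the $i$-adjacent flag of $(\gamma, 1)$ is $(\gamma\rho_i, 1)$ for $0 \le i \le d-3$, $(\gamma\rho'_{d-2}, 1)$ for $i = d-2$, and $(\gamma, 0)$ for $i = d-1$. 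Two flags lie on a common $i$-face if and only if they are connected by a sequence of $j$-adjacencies with $j \ne i$, so the face lattice is entirely determined by the flag set together with this adjacency data.

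By Lemma~\ref{Coxgroup}, $\Gamma(P_k)$ has no relations beyond those in the Coxeter presentation, so the hypothesis of matching presentations yields a canonical group isomorphism $\Gamma(P_1) \to \Gamma(P_2)$ sending each generator to its namesake. This extends to a bijection $\Gamma(P_1) \times \{0,1\} \to \Gamma(P_2) \times \{0,1\}$ which, by the preceding paragraph, preserves every $i$-adjacency, and therefore descends to an isomorphism $P_1 \cong P_2$ of face lattices.

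The main obstacle is the careful verification of the $i$-adjacency rules on orbit~$1$---in particular, that $\rho'_{d-2}$ acts on orbit-$1$ flags exactly as $\rho_{d-2}$ acts on orbit-$0$ flags, and that the low-rank generators $\rho_i$ for $i \le d-3$ genuinely fix the facet-swap and act on both orbits in parallel. The core of this was already established inside the proof of Lemma~\ref{generators} (where, for example, $\rho'_{d-2}(\Phi^{d-1}) = \Phi^{d-1,d-2}$ and $\rho_i(\Phi^{d-1}) = \Phi^{d-1,i}$ for $i \le d-3$ were shown); the rest is bookkeeping, underpinned by the fact that the Coxeter presentation precludes any hidden relation that could identify distinct flags spuriously.
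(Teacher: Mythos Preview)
Your proposal is correct and follows essentially the same route as the paper: set up the bijection between flags and $\Gamma(P_k)\times\{0,1\}$, use the common presentation to get a group isomorphism sending generators to their namesakes, and check that the induced bijection on flags preserves all $i$-adjacencies so that it descends to a face-lattice isomorphism. The paper's proof is more terse---it simply asserts that the flag bijection ``determines the required isomorphism''---whereas you spell out the adjacency rules (correctly, matching the computations inside Lemma~\ref{generators}) and make explicit the appeal to Lemma~\ref{Coxgroup} to ensure the presentations contain no hidden relations; but these are elaborations of the same argument, not a different one.
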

\begin{proof}
If $h \colon \sL(P_1) \to \sL(P_2)$ is an isomorphism,
let $\Phi$ be an appropriate base flag for $P_1$.
Then the generators of $\Gamma(P_2)$ corresponding
to the base flag $h(\Phi)$ must satisfy the same relations
that the generators of $\Gamma(P_1)$ corresponding
to $\Phi$ do, so that the groups have the same presentation.

Conversely, suppose $P_1$ and $P_2$ are
combinatorially two-orbit facet-intransitive polytopes
with the same presentation.
For $i = 1,2$,
let $\Phi_i$ be an appropriate base flag of $P_i$
and define the generators
$\rho^i_0, \dotsc, \rho^i_{d-2}, \rho^{i \prime}_{d-2}$ of $\Gamma(P_i)$
with respect to $\Phi_i$ as in Lemma~\ref{generators}.
Since $\Gamma(P_1)$ and $\Gamma(P_2)$ have the same presentation,
the map carrying $\rho^1_j \mapsto \rho^2_j$ and
$\rho^{1\prime}_{d-2} \mapsto \rho^{2\prime}_{d-2}$ extends to a group isomorphism $f$.
Then the bijection 
of the sets of flags
taking $\gamma(\Phi_1) \mapsto f(\gamma)(\Phi_2)$
and $\gamma(\Phi_1^{d-1}) \mapsto f(\gamma)(\Phi_2^{d-1})$,
for all $\gamma \in \Gamma(P_1)$,
determines the required isomorphism between the lattices $\sL(P_1)$ and $\sL(P_2)$.
\end{proof}

\section{Exceptional possibilities in \texorpdfstring{$\E^4$}{E\textasciicircum 4}}
\label{4polys}
We now return to the exceptional possibilities left open for combinatorially 
two-orbit 4-polytopes
(see the end of Section~\ref{prelim}.) Recall that such a polytope $P$ is in class $2_{\{1,2\}}$, so 
it is combinatorially fully transitive. For any flag $\Phi$, the 1-adjacent flag 
$\Phi^1$ and 2-adjacent flag $\Phi^2$ are in the same orbit as $\Phi$, while the 
0-adjacent flag $\Phi^0$ and 3-adjacent flag $\Phi^3$ are not. By
2-face-transitivity, all the 2-faces have the same number of sides, $p_1$. All the 
edges are in the same number of facets, $p_3$. By Lemma~\ref{evenval}
with $j = 0$ and $j = 3$, $p_1$ and 
$p_3$ are even. Since $P$ is not equivelar, the Schl\"afli symbol has the form $
\{p_1, \stak{p_2}{q_2}, p_3\}$ where $p_1$ and $p_3$ are even.

Each facet and vertex figure of $P$ has at most two combinatorial flag orbits, by the action of the automorphism group of $P$ restricted to these sections.
Since $P$ is facet-transitive, each facet must have both $p_2$-gons and $q_2$-gons as vertex figures. Since $P$ is vertex-transitive, each
vertex figure must have both $p_2$-gons and $q_2$-gons as faces.
Thus the facets and vertex figures are not combinatorially regular:
they are combinatorially two-orbit 3-polytopes.
By the preceding proof, the facets and vertex figures are isomorphic to one of the four two-orbit polyhedra. Since all 2-faces are the same, and by the necessary compatibility of the vertex figures with the facets, the two possibilities are:
\begin{itemize}
\item
A polytope whose facets are isomorphic to the rhombic dodecahedron,
and whose vertex figures are isomorphic to cuboctahedra;
the modified Schl\"afli symbol is $\{4,\stak{3}{4},4\}$, and
\item
A polytope whose facets are isomorphic to the rhombic triacontahedron,
and whose vertex figures are isomorphic to icosidodecahedra;
the modified Schl\"afli symbol is $\{4,\stak{3}{5},4\}$.
\end{itemize}
In each case, the polytope would be combinatorially self-dual. However, we demonstrate that such polytopes cannot exist.

Suppose that $P$ has the first combinatorial type above, $\{4,\stak{3}{4},4\}$.
Consider the angle at a vertex $v$ in a 2-face $F$ containing $v$.
That is, in the affine hull $\aff(F)$, which is a plane, we take the interior angle of the quadrilateral $F$ at $v$.
The sum of all these angles at the 4 vertices of $F$ is $2\pi$.
So, if we take the sum of all such angles in the whole polytope $P$---i.e.\@ the sum of the angle
for every incident pair of vertex and 2-face in $P$---the sum is $2\pi f_2$, where $f_2$ is the number of 2-faces of $P$. Since every vertex is in 24 2-faces (the number of edges of the cuboctahedron),
and each 2-face has 4 vertices, $f_2 = 6 f_0$ (where $f_0$ is the number of vertices of $P$).

On the other hand, let $v$ be any vertex of $P$ and consider the sum of the
angles in each 2-face incident to $v$.
Each 2-face lies in exactly two facets: one where $v$ is in 4 edges, and one where $v$ is in 3 edges.
(Correspondingly, each edge of the vertex figure, the cuboctahedron, is in one square and one triangle.)
We may partition the 24 2-faces at $v$ into 6 sets of 4, each set consisting of the 2-faces of a facet $G$
containing $v$ wherein $v$ has valence 4.
The sum of the angles of $v$ within these four 2-faces must be less than $2\pi$ (the difference from $2\pi$ is the angular deficiency or defect.) Hence the sum of the angles at $v$ in all the 2-faces containing $v$
is less than $6 \cdot 2\pi$,
and the sum of the angles of all incident pairs of vertices and 2-faces is therefore less than $6 f_0 2 \pi$.

But this contradicts the earlier conclusion that the sum is exactly $6 f_0 2 \pi$.
Therefore, no such polytope can exist.

The same argument rules out the possibility of a polytope of the second type, $\{4,\stak{3}{5},4\}$.
Each vertex is in 60 2-faces (the number of edges of the icosidodecahedron),
and each 2-face has 4 vertices, so we have $f_2 = 15 f_0$,
and the sum of the angles over all incident pairs of vertex and 2-face
is $15 f_0 2\pi$.

On the other hand, the 2-faces at each vertex $v$ can be partitioned
into 12 sets of 5, each set consisting of the 2-faces of a particular facet $G$
containing $v$ wherein $v$ has valence 5.
The sum of the angles at $v$ in all these 2-faces is less than $2\pi$,
so the sum of all the angles of $v$ in the 60 2-faces containing $v$
is less than $12 \cdot 2\pi$.

Thus we have $15 f_0 2 \pi < 12 f_0 2\pi$, a contradiction, so no such polytope can exist.

With these possibilities disposed of, we have proved Theorem~\ref{thm:combintopes}.

\section{Tilings}\label{tilings}
In this section, we deal with combinatorially two-orbit tilings of Euclidean space $\E^d$. All the tilings we consider are by convex polytopes, and are \emph{face-to-face}, which means that the intersection of any two tiles is a face of each (possibly the empty face).
A tiling is \emph{locally finite} if every bounded set meets only finitely many tiles.

\begin{dfn}
An \emph{LFC tiling} is a face-to-face
locally finite tiling of $\E^{d}$ by convex $d$-polytopes.
\end{dfn}
An LFC tiling of $\E^d$ is an abstract polytope of rank $d+1$.
A tiling is \emph{normal} if it satisfies three conditions:
\begin{enumerate}[label=N.\arabic*]
\item\label{topodisk} Every tile is a topological ball.
\item\label{intersect} The intersection of every two tiles is connected (or empty).
\item\label{bounded} The tiles are uniformly bounded.
That is, there are positive numbers $u$ and $U$
such that every tile contains a ball of radius $u$ and is contained in a ball of radius $U$.
\end{enumerate}
Any convex tiling automatically satisfies properties
\ref{topodisk} and \ref{intersect}. So when we require a tiling to be normal,
it is equivalent to require the tile sizes to be bounded.
Every normal tiling is locally finite.

In Section~\ref{results},
Lemmas \ref{cochain-trans} and \ref{evenval}
and Corollary~\ref{singleval} apply also to combinatorially 2-orbit LFC tilings.
Corollary~\ref{vertorfacet} holds, but requires a modified proof:
\begin{crlp}
If a rank-$d$ LFC tiling $\sT$ is in class $2_I$ and $j \notin I$,
then $j = 0$ or $j = d - 1$.
\end{crlp}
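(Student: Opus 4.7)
My plan is to adapt the proof of Corollary~\ref{vertorfacet} almost verbatim, handling the one additional case in which the rank-$3$ section used in that argument degenerates. Suppose $\sT$ is a rank-$d$ LFC tiling of class $2_I$ with $j\notin I$ and $0<j<d-1$. As noted in the opening of Section~\ref{tilings}, Lemma~\ref{evenval} and Corollary~\ref{singleval} apply to LFC tilings, so both $p_j$ and $p_{j+1}$ are single-valued entries of the Schl\"afli symbol and both are even. The section $F_{j+2}/F_{j-2}$ then has rank~$3$ and Schl\"afli symbol $\{p_j,p_{j+1}\}$.

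As in the polytope argument, if this section is combinatorially a convex $3$-polytope, Euler's theorem forbids both entries from being even, yielding the desired contradiction. In an LFC tiling this is the case whenever at least one of $F_{j-2}$ and $F_{j+2}$ is a proper face: if $F_{j+2}$ is proper, the section lies inside a convex face polytope; and if $F_{j+2}$ is the greatest face while $F_{j-2}$ is proper, then the section is the link of $F_{j-2}$ in $\sT$, which by local finiteness of the face-to-face convex tiling is a topological $2$-sphere and hence combinatorially a convex $3$-polytope by Steinitz's theorem. In either situation Euler applies exactly as in the polytope proof.

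This leaves only the case $F_{j-2}=F_{-1}$ and $F_{j+2}=F_d$, which forces $d=3$ and $j=1$, so $\sT$ is a tiling of $\E^2$ and the ``section'' is $\sT$ itself. Here Corollary~\ref{singleval} permits stacked entries only at the nonexistent positions $i=j-1=0$ and $i=j+2=3$, so the symbol $\{p_1,p_2\}$ is fully equivelar, and by Lemma~\ref{evenval} both entries are even. But an equivelar LFC convex tiling of $\E^2$ must be combinatorially $\{3,6\}$, $\{4,4\}$, or $\{6,3\}$, and only $\{4,4\}$ has both entries even; the face lattice of the square tiling is flag-transitive, hence combinatorially one-orbit rather than two-orbit, a contradiction. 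Disposing of this equivelar $\{4,4\}$ possibility is the main obstacle, as it is the only substantive departure from the polytope proof. Therefore $j\in\{0,d-1\}$.
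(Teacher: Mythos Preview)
Your argument tracks the paper's closely and is correct up to the exceptional case $d=3$, $j=1$. There, however, you assert that an equivelar LFC convex tiling of $\E^2$ must have type $\{3,6\}$, $\{4,4\}$, or $\{6,3\}$. This is false for LFC tilings without a normality hypothesis: the paper itself later quotes the result from \emph{Tilings and Patterns} that combinatorially regular (hence equivelar) LFC tilings of type $\{j,k\}$ exist for every pair with $1/j+1/k\le 1/2$. Thus, for instance, an LFC tiling of the plane of type $\{4,6\}$ exists (necessarily non-normal), has both entries even, and is not excluded by your argument. The restriction to the three ``Euclidean'' types you invoke holds only for normal tilings, which Corollary~$2'$ does not assume.

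The repair is exactly the step the paper takes: once the Schl\"afli symbol is single-valued, an equivelar rank-$3$ LFC tiling of the (simply connected) plane is automatically combinatorially regular, hence one-orbit, contradicting the two-orbit hypothesis directly. No parity or Euler-type restriction on $(p_1,p_2)$ is needed at this point, and the argument then covers all LFC tilings rather than only normal ones.
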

\begin{proof}
Suppose $0 < j < d-1$.
Then there is an incident pair of faces $F_{j+2}$ and $F_{j-2}$.
If $F_{j+2}/F_{j-2}$ is a proper section,
then it is isomorphic to a convex polytope,
with symbol $\{p_j,p_{j+1}\}$
with two even entries (by Lemma~\ref{evenval}).
It is impossible for convex polytopes to have such a symbol
\cite[\S 13.1]{grunbaum1967convex}.

On the other hand, if $F_{j+2}/F_{j-2}$ is all of $\sT$,
then $j - 2 = -1$ and $j+2=d$, i.e.\@ $j = 1$ and $d = 3$,
so we have an edge-intransitive planar tiling.
By Corollary~\ref{singleval}, $\sT$ has type $\{p_1,p_2\}$
with single-valued entries. Hence $\sT$ is a regular tiling, a contradiction.
\end{proof}

Lemmas \ref{generators}, \ref{Coxgroup}, and \ref{SameGpSameTope} in Section~\ref{combtwo}
also apply to LFC tilings, but since the automorphism group of a tiling is not finite,
we get no corresponding short list of potential diagrams.
If we could conclude that the automorphism group was of so-called
``affine type,''
then the analog to Theorem~\ref{Comb2isGeo2} would follow.

Theorem 4A4 of McMullen's thesis \cite{McMThesis}
says, for $d \neq 3$, a rank-$d$ convex polytope with combinatorially regular vertex figures
and combinatorially regular facets is combinatorially regular.
The proof works equally well for LFC tilings;
we sketch it here.

\begin{lmm}\label{CombRegVerfsFacets}
A rank-$d$ LFC tiling $\sT$, $d \neq 3$, whose vertex figures
and facets are all combinatorially regular
is itself combinatorially regular.
\end{lmm}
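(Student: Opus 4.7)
The plan is to mimic the proof of \cite[Theorem 4A4]{McMThesis}, adapting it to the tiling setting. The argument has three main steps: establishing that $\sT$ is equivelar, constructing involutions in $\Gamma(\sT)$ that realize flag-adjacency, and verifying that these involutions collectively yield flag-transitivity.

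First, I would show that the hypotheses force $\sT$ to have a common Schl\"afli symbol $\{p_1, \dotsc, p_{d-1}\}$. Two facets sharing a ridge, being combinatorially regular, induce matching structure on that shared ridge; combined with the regularity of the vertex figures at vertices of the ridge (which forces the ``missing'' top entry of each facet's symbol to agree across the shared ridge), this pins down the facet isomorphism type. Flag-connectedness of $\sT$ then propagates this agreement globally, so that all facets share one Schl\"afli type $\{p_1, \dotsc, p_{d-2}\}$ and all vertex figures share one type $\{p_2, \dotsc, p_{d-1}\}$, making $\sT$ equivelar.

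Next, fix a base flag $\Phi = \{F_{-1}, F_0, \dotsc, F_d\}$ of $\sT$. For each $i \in \{0, \dotsc, d-1\}$ I would construct an involution $\rho_i \in \Gamma(\sT)$ with $\rho_i(\Phi) = \Phi^i$. For $i \leq d-2$ this $\rho_i$ begins as an automorphism of the regular facet $F_{d-1}$ realizing the internal flag change, and is then extended to all of $\sT$ by propagating across shared ridges into neighboring facets, using the rigidity of regular extensions matching on a prescribed ridge. For $i = d - 1$ the symmetric argument applies to the vertex figure at $F_0$. Once the $\rho_i$'s are in place, they act transitively on flags by the usual flag-connectedness argument, so $\Gamma(\sT)$ is flag-transitive and $\sT$ is combinatorially regular.

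The main obstacle is verifying that the extension in the second step yields a globally well-defined automorphism of $\sT$, rather than a rule sensitive to the path of propagation. Propagating $\rho_i$ around any closed cycle in the facet-adjacency graph must compose to the identity, and such cycles are generated by the elementary cycles of facets meeting at a $(d-3)$-face $F_{d-3}$. For $d \geq 4$, the section $F_d / F_{d-3}$ is a polygon of the now-equivelar tiling, and the rigidity of the regular facet and vertex-figure data forces the extension around this polygon to close trivially. The cases $d \leq 2$ are vacuous; $d = 3$ is the genuinely exceptional case, since there the relevant ``cycle'' around a $(d-3) = 0$-face is the entire vertex figure, which is no longer constrained to a single polygonal section, and the extension need not cohere, as the trihexagonal and rhombille tilings illustrate.
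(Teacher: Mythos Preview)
Your approach is correct and in the same spirit as the paper's, which in turn follows McMullen's Theorem 4A4. The paper's sketch, however, is much leaner: it carries out only your first step, establishing equivelarity by observing that each vertex figure (being regular) pins down $p_i(v)$ for $i \geq 2$, that edge figures force $p_i(v)$ to be constant along edges for $i \geq 3$, and that dually facet--ridge chains handle $i \leq d-3$, with vertex--facet chains covering the residual case $d=4$, $i=2$. The paper then stops, implicitly invoking the tiling analog of \cite[Theorem 1B9]{ARP} (equivelar implies combinatorially regular, valid here by the simple connectedness of $\E^{d-1}$), which absorbs your Steps 2 and 3. Your explicit construction of the $\rho_i$ and the monodromy check around $(d-3)$-faces is a legitimate way to make that implication self-contained rather than cited.

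One small correction to your diagnosis of the $d=3$ exception: the obstruction already appears in Step 1, not in the extension step. For $d=3$ the facets and vertex figures are polygons, so the hypotheses are vacuous; your ridge-matching argument imposes no constraint because the facet symbol $\{p_1\}$ and the vertex-figure symbol $\{p_2\}$ do not overlap. The trihexagonal tiling is simply not equivelar. Once a rank-$3$ LFC tiling \emph{is} equivelar, your monodromy argument around a vertex would in fact succeed.
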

\begin{proof}
Each vertex figure $\sT/v$ is combinatorially regular, hence equivelar,
so for any $i \in \{2,\dotsc,d-1\}$,
every incident pair of $(i+1)$-face $G_{i+1}$ and $(i-2)$-face $G_{i-2}$ containing $v$
gives a polygonal section $G_{i+1}/G_{i-2}$ of the same size, say $p_i(v)$.
Each edge figure, being contained in a vertex figure, is also equivelar,
so for any $i \in \{3,\dotsc,d-1\}$,
the size of each section $G_{i+1}/G_{i-2}$ of incident faces containing
a given edge is also constant.
Since any two vertices of $\sT$ may be linked by a chain of vertices
and edges,
the Schl\"afli entries $p_i$ with $i \geq 3$ are well-defined on all of $\sT$.

Similarly, face-chains of facets and ridges show that
$p_i$ is well-defined for $i \leq d-3$,
and face-chains of vertices and facets cover the remaining case
when $d = 4$ and $i = 2$.
\end{proof}

\begin{thm}
All combinatorially 2-orbit LFC tilings are of $\E^2$ or $\E^3$.
\end{thm}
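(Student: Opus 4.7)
The plan is to combine the rank-restricting lemmas already proved with Theorem~\ref{thm:combintopes} and Lemma~\ref{CombRegVerfsFacets} to bound the rank of any combinatorially two-orbit LFC tiling. First, by Corollary~2$'$ together with the same analysis of missing ranks used in Section~\ref{results} for polytopes (applied now via Corollary~\ref{singleval} and Lemma~\ref{evenval} in the tiling setting), a combinatorially two-orbit rank-$d$ LFC tiling $\sT$ is either in a class $2_I$ with $|I|=d-1$ and unique missing rank equal to $0$ or $d-1$, or else $d=4$ and $I=\{1,2\}$. The latter possibility already describes a tiling of $\E^3$, which is allowed.

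Suppose then that $\sT$ is in class $2_I$ with $|I|=d-1$; by the symmetry between vertices and tiles, I may assume the missing rank is $d-1$, so $\sT$ is tile-intransitive. I would then establish two structural facts. \emph{(i)} Every tile is combinatorially regular: the two flag orbits of $\sT$ are distinguished by which tile the flag lies in, so all flags sharing a given tile form a single $\Gamma(\sT)$-orbit, and the tile stabilizer acts flag-transitively on that tile. \emph{(ii)} Each vertex figure $\sT/v$ has automorphism group with at most two flag orbits. By Lemma~\ref{cochain-trans} applied to the missing rank $d-1$, $\Gamma(\sT)$ is transitive on chains of cotype $\{d-1\}$; any automorphism carrying one such cochain through $v$ to another necessarily fixes $v$, so the stabilizer $\Gamma_v$ is transitive on cochains of cotype $\{d-1\}$ containing $v$. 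Each such cochain extends to exactly two flags (one per incident tile), and these two flags lie in opposite $\Gamma(\sT)$-orbits. Hence $\Gamma_v$ has exactly two flag orbits on $\sT/v$, and the inclusion $\Gamma_v\subseteq\Gamma(\sT/v)$ bounds the flag orbits of $\Gamma(\sT/v)$ by two.

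The argument concludes by dichotomy. If each $\sT/v$ is combinatorially regular, then all tiles and vertex figures of $\sT$ are combinatorially regular, so Lemma~\ref{CombRegVerfsFacets}---valid whenever $d\neq 3$---would force $\sT$ itself to be combinatorially regular, contradicting the two-orbit hypothesis. Hence $d=3$, and $\sT$ is a tiling of $\E^2$. Otherwise $\sT/v$ is combinatorially two-orbit; since vertex figures of convex tilings are realized by convex polytopes of dimension $d-1$, Theorem~\ref{thm:combintopes} gives $d-1=3$, i.e.\@ $d=4$ and $\sT$ tiles $\E^3$. The vertex-intransitive subcase is handled symmetrically, with tiles and vertex figures exchanging roles (the tiles being convex $(d-1)$-polytopes to which Theorem~\ref{thm:combintopes} is applied), producing the same bound $d\in\{3,4\}$. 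The chief obstacle is fact \emph{(ii)}: the ambient group $\Gamma_v$ need not exhaust $\Gamma(\sT/v)$, but the key observation is that a larger group can only merge orbits, so the count of two orbits under $\Gamma_v$ suffices to bound by two the number of orbits under the full automorphism group, making the subsequent appeal to Theorem~\ref{thm:combintopes} legitimate.
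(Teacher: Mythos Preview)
Your proof is correct and follows essentially the same approach as the paper: both use that facets and vertex figures of a combinatorially two-orbit tiling are themselves combinatorially regular or two-orbit, then apply Theorem~\ref{thm:combintopes} in high rank to force regularity and invoke Lemma~\ref{CombRegVerfsFacets} for a contradiction. Your case analysis on the class $2_I$ and explicit verification of facts (i) and (ii) supply more detail than the paper's terse argument, but the key ingredients and logical structure are identical.
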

\begin{proof}
A combinatorially two-orbit tiling has facets and vertex figures which are either combinatorially regular or combinatorially two-orbit.  If we are tiling $\E^d$, and $d \geq 4$, then by Theorem~\ref{thm:combintopes}
the facets and vertex figures are actually combinatorially regular, so the tiling is combinatorially regular. Thus $d < 4$.

Of course, LFC tilings of $\E^0$ and $\E^1$ are trivial, and no combinatorially two-orbit ones exist.
The remaining cases are tilings of $\E^2$ or $\E^3$.
\end{proof}

\subsection{Planar tilings}\label{plane}
Planar tilings are the only case, in light of Lemma~\ref{CombRegVerfsFacets},
where a combinatorially two-orbit tiling
can have combinatorially regular tiles and vertex figures. Indeed, any planar tiling has combinatorially regular
tiles and vertex figures, since all polygons are combinatorially regular.

\begin{thm}
There are infinitely many (isomorphism classes of)
combinatorially two-orbit LFC tilings of the plane.
\end{thm}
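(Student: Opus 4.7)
By Corollary~$2'$, any combinatorially two-orbit LFC tiling of $\E^2$ lies in class $2_{\{0,1\}}$ (tile-intransitive) or $2_{\{1,2\}}$ (vertex-intransitive). Theorem~\ref{thm:normtilings} (proved later in this section) will show that the \emph{normal} combinatorially two-orbit tilings of $\E^2$ consist of just two isomorphism classes: the trihexagonal tiling and its dual, the rhombille tiling. Thus, to exhibit infinitely many combinatorially two-orbit LFC tilings, we are forced to construct \emph{non-normal} examples, i.e.\@ tilings whose tile diameters grow without bound.

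For each integer $n \geq 7$ I plan to exhibit a tile-intransitive LFC tiling $\sT_n$ whose tiles are triangles and $n$-gons and whose vertex configuration is $3.n.3.n$, meaning every vertex lies cyclically in a triangle, an $n$-gon, a triangle, and an $n$-gon. For $n = 6$ this would be the trihexagonal tiling; for $n \geq 7$ the abstract structure is that of a quasiregular hyperbolic tiling, yet it can still be realized in $\E^2$ by allowing tile sizes to grow outward. Concretely, one builds the tiling shell-by-shell starting from a base tile: at each newly added vertex, the interior angles of the two incident triangles and two incident $n$-gons are chosen to sum to $2\pi$, with each polygon kept convex (all angles in $(0,\pi)$). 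Tiles grow in size at successive shells, which accommodates the ``angle excess'' inherent in a hyperbolic combinatorial type, while local finiteness holds automatically because any bounded region meets only finitely many shells.

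To verify that $\sT_n$ is combinatorially two-orbit, I would check that the automorphism group of its face lattice acts transitively on vertices (all of configuration $3.n.3.n$) and on edges (each separating a triangle from an $n$-gon) but has two orbits on tiles (triangles and $n$-gons), placing $\sT_n$ in class $2_{\{0,1\}}$. Different values of $n$ yield non-isomorphic tilings, since the combinatorial type of the non-triangular tile in either flag-orbit encodes $n$.

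The main obstacle is the concrete geometric realization: showing that for every $n \geq 7$, the shell-by-shell assignment of interior angles and side lengths can be performed consistently throughout the whole plane while keeping every tile convex and the tiling face-to-face. Because the combinatorial type is hyperbolic, tile areas must grow roughly exponentially with distance from the base tile, and the choices of polygon angles and side lengths must be globally compatible at each vertex simultaneously. A direct inductive construction with geometrically scaling tile sizes should work, but rigorously verifying convexity and face-to-face compatibility at every stage is the technical heart of the proof.
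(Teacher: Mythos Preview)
Your target is exactly right: the infinite family consists of tilings with vertex configuration $(3.p.3.p)$ for $p \geq 7$, and different values of $p$ are clearly non-isomorphic. Where your proposal diverges from the paper is in how you obtain the geometric realization, and this is also where you leave a genuine gap.

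You propose to build each $(3.n.3.n)$ tiling directly, shell by shell, choosing angles and side lengths so that everything closes up convexly at every vertex. You yourself flag this as ``the technical heart of the proof'' and leave it unproved. Making this rigorous is nontrivial: one must show the angle choices can be made coherently around the entire boundary of each shell simultaneously, that no tile degenerates, that the construction actually exhausts $\E^2$, and that the resulting abstract polytope has the full expected automorphism group (not merely the shell-preserving automorphisms built into your construction). None of this is impossible, but as written it is a sketch with its key step missing.

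The paper avoids all of this by a two-step shortcut. First, it invokes a known result (Gr\"unbaum--Shephard, \emph{Tilings and Patterns}, 4.7.1, together with their convexification result) that for every $p \geq 6$ there exists a combinatorially regular LFC tiling of type $\{p,3\}$ by convex tiles. Second, it applies a purely local geometric operation to any such tiling: truncate each vertex to the midpoints of its incident edges. Each vertex (3-valent) is replaced by a triangle, each $p$-gonal tile by the convex $p$-gon spanned by its edge midpoints, and the result is a face-to-face LFC tiling by convex polygons with configuration $(3.p.3.p)$. The two-orbit property is then inherited directly from the combinatorial regularity of $\{p,3\}$: automorphisms of $\{p,3\}$ act transitively on its flags, hence on the flags of the truncation within each of the two tile types, while no automorphism can interchange a triangle with a $p$-gon for $p \neq 3$.

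The practical lesson: rather than reproving a version of Gr\"unbaum--Shephard's existence theorem from scratch inside your argument, cite it to obtain $\{p,3\}$, and then the passage to the two-orbit tiling is a one-line local construction.
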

To see this, first we show that there are combinatorially regular tilings by convex $p$-gons,
with three tiles at each vertex, for every $p \geq 6$.
This is a consequence of result 4.7.1 of \emph{Tilings and Patterns}
\cite[194]{grunbaum1986tilings}.
We paraphrase the result, taking advantage of result 4.1.1 \cite{grunbaum1986tilings}
that homeomorphisms preserving a tiling are equivalent to combinatorial automorphisms,
and of convexification \cite[202]{grunbaum1986tilings}.

\begin{lmm}[{\cite[\nopp 4.7.1]{grunbaum1986tilings}}]
There exists a combinatorially regular LFC tiling of type $\{j,k\}$,
for positive integers $j, k$, if and only if $1/j + 1/k \leq 1/2$. Such a
tiling can be normal only if equality holds.
\end{lmm}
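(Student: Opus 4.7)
The plan is to treat the three regimes $1/j+1/k>1/2$, $=1/2$, and $<1/2$ separately. If $1/j+1/k>1/2$ with $j,k\geq 3$, then $\{j,k\}$ is the Schl\"afli symbol of a Platonic solid, whose face lattice is finite and hence not an LFC tiling of $\E^2$; degenerate cases $j\leq 2$ or $k\leq 2$ sit in this regime too and similarly fail to tile. If $1/j+1/k=1/2$, the three classical regular tilings $\{3,6\}$, $\{4,4\}$, $\{6,3\}$ are convex, face-to-face, locally finite, and normal, witnessing both existence and normal existence simultaneously.

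For the strict inequality, I would build the tiling from the hyperbolic tessellation of type $\{j,k\}$. The interior angle of a regular hyperbolic $j$-gon varies continuously from $(j-2)\pi/j$ down to $0$ as the edge length varies from $0$ to $\infty$, so the condition $1/j+1/k<1/2$ is precisely the condition $2\pi/k<(j-2)\pi/j$ that permits a regular hyperbolic $j$-gon of interior angle $2\pi/k$. Reflecting such a $j$-gon in its sides produces the regular tessellation of $\mathbb{H}^2$ of type $\{j,k\}$. Composing the Klein-model homeomorphism between $\mathbb{H}^2$ and the open unit disk with a radial homeomorphism of that disk onto $\E^2$ transports this to a face-to-face topological tiling of $\E^2$ by topological disks of combinatorial type $\{j,k\}$. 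The convexification principle from \cite[202]{grunbaum1986tilings} then replaces each topological tile by a convex polygon without altering the combinatorial type, giving the desired LFC tiling.

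For the normality clause, I would show that any normal LFC tiling $\sT$ of type $\{j,k\}$ in $\E^2$ forces $1/j+1/k=1/2$. Fix a basepoint $x_0$ and, for large $R$, let $f_i(R)$ be the number of $i$-faces of $\sT$ meeting the Euclidean ball $B_R(x_0)$. The uniform bounds $u,U$ in axiom \ref{bounded} imply that each $f_i(R)=\Theta(R^2)$, while the number of faces meeting the annulus within a bounded distance of $\partial B_R$ is only $O(R)$. The face-incidence identities $j f_2(R)=2 f_1(R)+O(R)$ and $k f_0(R)=2 f_1(R)+O(R)$, combined with the planar Euler relation $f_0(R)-f_1(R)+f_2(R)=O(R)$ for the patch, yield $1/j+1/k=1/2$ after dividing by $R^2$ and letting $R\to\infty$. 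Hence strict inequality forbids normality.

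The main obstacle is controlling the $O(R)$ boundary error in the incidence and Euler identities: the uniform upper bound $U$ on circumradius is essential here, since without it the boundary contribution could be of the same order as the interior counts and the Euclidean area-versus-perimeter argument would collapse. This asymptotic bookkeeping is exactly what separates the Euclidean realizations ($1/j+1/k=1/2$) from the hyperbolic ones constructed above, which can be convexified into $\E^2$ but only at the cost of shrinking tiles at infinity. The existence side is comparatively routine once the hyperbolic tessellation and convexification are granted.
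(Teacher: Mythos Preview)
The paper does not supply a proof of this lemma; it is cited from Gr\"unbaum and Shephard (their result 4.7.1), with only the remark that the statement has been rephrased for LFC tilings using their result 4.1.1 (topological automorphisms coincide with combinatorial ones) and the convexification procedure on p.~202. So there is no ``paper's own proof'' to compare against directly.

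Your sketch is essentially the standard route and is correct. The hyperbolic construction---a regular hyperbolic $j$-gon with interior angle $2\pi/k$, reflected to fill $\mathbb H^2$, transported to $\E^2$ by a homeomorphism, then convexified---is exactly the mechanism the paper alludes to via the convexification reference. The normality clause you handle by an asymptotic Euler count; this is the same engine behind the Gr\"unbaum--Shephard results 3.5.1 and 3.5.4 that the paper quotes immediately after this lemma, so your argument dovetails with what follows.

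One point worth making explicit in the $1/j+1/k>1/2$ case: you say ``$\{j,k\}$ is the Schl\"afli symbol of a Platonic solid, whose face lattice is finite and hence not an LFC tiling of $\E^2$.'' What you actually need is that \emph{every} combinatorially regular LFC tiling of $\E^2$ with this symbol is isomorphic to the universal $\{j,k\}$, which is the (finite) Platonic solid. That step uses the simple connectedness of $\E^2$, via the same loop-contraction argument the paper carries out in Lemma~\ref{Coxgroup}. As written you have only observed that one particular realisation of the symbol is finite, not that an a~priori different infinite one cannot occur.
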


Since $1/3 + 1/p \leq 1/2$ for every $p \geq 6$,
there is a combinatorially regular tiling $\{p,3\}$
for every such $p$.
From this tiling, we can form a combinatorially two-orbit
tiling by ``truncating'' at each vertex to the midpoints of its incident edges,
analogously to the formation of the cuboctahedron from the cube,
of the icosidodecahedron from the dodecahedron,
or of the trihexagonal tiling from the regular hexagonal tiling.
Each edge of $\{p,3\}$ is reduced to its midpoint.
The midpoints of the three edges incident to a vertex
become the vertices of a triangular tile.
The midpoints of the $p$ edges of a $p$-gonal tile
in $\{p,3\}$ become the vertices of a smaller $p$-gonal tile.
For instance, with $p = 7$, this is a ``triheptagonal'' tiling.
Each vertex of this new tiling (formerly an edge midpoint)
is in four tiles: two triangles (the vertex figures of the endpoints
of the former edge), and two $p$-gons.
Thus the tiling can be described $(3.p.3.p)$,
a notation that gives, in cyclic order, the number of sides of each tile
incident to a vertex of the tiling.

However, none of these examples are normal
for $p \geq 7$.
We proceed to show
\begin{thm}
Every normal combinatorially two-orbit planar tiling
is isomorphic to one of the (geometrically) two-orbit planar tilings:
either the trihexagonal tiling or its dual, the rhombille tiling.
\end{thm}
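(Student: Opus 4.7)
The plan is to combine the structural constraints from Section~\ref{results} with normality to pin down the modified Schl\"afli symbol, and then invoke Lemma~\ref{SameGpSameTope} (which, as noted, applies verbatim to LFC tilings) to upgrade uniqueness of combinatorial type to uniqueness up to isomorphism. By Corollary~2$'$, any combinatorially two-orbit normal tiling $\sT$ of $\E^2$ must be either tile-intransitive or vertex-intransitive; passing to the dual if necessary, I treat only the tile-intransitive case. Corollary~\ref{singleval} and Lemma~\ref{evenval} then give $\sT$ the modified Schl\"afli symbol $\{\stak{p_1}{q_1}, p_2\}$ with $p_1 \neq q_1$, $p_1, q_1 \geq 3$ (since tiles are convex), and $p_2 \geq 4$ even.

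The key step is to extract a Diophantine constraint from normality. Euler's theorem for normal planar tilings, as developed in \cite[\S 3.3]{grunbaum1986tilings}, ensures that vertex, edge, and tile densities $v_d$, $e_d$, $f_d$ are well defined and satisfy $v_d - e_d + f_d = 0$. Combining this with $2 e_d = p_2 v_d$ (every vertex has valence $p_2$) and $2 e_d = \bar{t} f_d$ (where $\bar{t}$ is the average tile size) yields $\bar{t} = 2 p_2/(p_2 - 2)$. On the other hand, Lemma~\ref{cochain-trans} gives that $\Gamma(\sT)$ is transitive on chains of cotype $\{2\}$, and each such vertex-edge pair extends to exactly two flags that are 2-adjacent and therefore in opposite orbits; hence the two flag orbits have equal density. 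Counting flags tile-by-tile, where a $p_1$-gon carries $2p_1$ flags and a $q_1$-gon carries $2q_1$ flags, then forces the two tile types to occur with densities inversely proportional to their sizes, so $\bar{t} = 2 p_1 q_1/(p_1 + q_1)$. Equating the two expressions produces the clean identity
\[
\frac{1}{p_1} + \frac{1}{q_1} + \frac{2}{p_2} = 1.
\]

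A short enumeration under $p_1 \neq q_1$, $p_1, q_1 \geq 3$, and $p_2 \geq 4$ even leaves only $(p_1, q_1, p_2) = (3, 6, 4)$, the combinatorial type of the trihexagonal tiling. Lemma~\ref{SameGpSameTope} in its LFC-tiling form then shows that any LFC tiling with modified Schl\"afli symbol $\{\stak{3}{6}, 4\}$ is isomorphic to the trihexagonal tiling, and dually the vertex-intransitive case yields only the rhombille tiling. I expect the main obstacle to be justifying the Euler-style density identity and the equal-density of the two flag orbits in a large-disk limit; both rest on controlling boundary contributions, which is exactly what the uniform-boundedness axiom~\ref{bounded} is designed to furnish, but both deserve a careful citation to the Gr\"unbaum--Shephard machinery rather than a re-derivation.
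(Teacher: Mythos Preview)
Your argument is correct and reaches the same Diophantine constraint as the paper, but by a genuinely different route. The paper quotes Gr\"unbaum--Shephard's Lemma~3.5.4 (and its dual 3.5.1), a \emph{local} identity: in a normal tiling where every vertex has the same cyclic pattern of tile sizes $k_1,\dots,k_j$, one has $\sum_i (k_i-2)/k_i = 2$. Since vertex-transitivity and Lemma~\ref{evenval} force the pattern $(p_1.q_1)^{p_2/2}$ around every vertex, this immediately gives your equation $1/p_1 + 1/q_1 + 2/p_2 = 1$, and the paper then enumerates as you do. You instead recover the same identity globally, combining the Euler density relation with the observation that the two flag orbits have equal density (whence the tile-type densities are in ratio $q_1:p_1$ and $\bar t$ is their harmonic mean). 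This is a bit more work but is more self-contained and makes the role of normality explicit. You are also more careful than the paper in explicitly invoking Lemma~\ref{Coxgroup} and Lemma~\ref{SameGpSameTope} to pass from the Schl\"afli data to a unique isomorphism class; the paper leaves that step implicit after identifying the vertex type $(3.6.3.6)$.

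One small caveat on your ``pass to the dual'' shortcut: the geometric dual of a normal LFC tiling is normal but need not have convex tiles, so the statement that Lemma~\ref{Coxgroup} ``applies to LFC tilings'' does not literally cover it. Either handle the vertex-intransitive case directly with the dual density identity (as the paper does via Lemma~3.5.1), or observe that the simple-connectedness argument underlying Lemma~\ref{Coxgroup} goes through for any normal planar tiling, convexity playing no role.
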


By Corollary~$2'$,
a combinatorially two-orbit tiling $\sT$ of $\E^2$ is either facet-intransitive,
in which case $\Gamma(\sT)$ acts transitively on its vertices,
or vertex-intransitive, in which case $\Gamma(\sT)$ acts transitively on its facets (tiles).

In the former case,
we apply 
\begin{lmm}[{\cite[\nopp 3.5.4]{grunbaum1986tilings}}]
If every vertex of a normal tiling $\sT$ has valence $j$,
and is incident with tiles which have $k_1, \dotsc, k_j$ adjacents,
then
\[
\sum_{i=1}^j \frac{k_i - 2}{k_i} = 2.
\]
\end{lmm}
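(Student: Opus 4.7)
The plan is a large-patch counting argument: apply Euler's formula to big discs of tiles, use the local vertex hypothesis to express edge and tile counts in terms of the vertex count, and invoke normality to kill the boundary contributions in the limit.

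Fix a point $x \in \E^2$ and for each $r > 0$ let $A(r)$ be the patch of tiles meeting the closed disc of radius $r$ around $x$. Normality (uniformly bounded tile diameters) guarantees that $A(r)$ is a topological disc once $r$ is large, that the interior counts of vertices, edges, and tiles grow like $r^2$, and that boundary counts contribute only $O(r)$. Let $v(r)$, $e(r)$, and $t_k(r)$ denote the numbers of vertices, edges, and tiles with $k$ adjacents in $A(r)$, and set $t(r) = \sum_k t_k(r)$. Euler's formula for a topological disc gives $v(r) - e(r) + t(r) = 1$.

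Now translate the local vertex hypothesis into incidence equalities. Because $\sT$ is face-to-face and the tiles are convex polygons, the number of adjacents of a tile equals its number of vertices. Let $m_k$ be the multiplicity of $k$ in the list $(k_1,\dotsc,k_j)$. Counting vertex--tile incidences two ways---each interior vertex contributes $m_k$ incidences with $k$-adjacent tiles, while each $k$-adjacent tile contributes $k$ incidences with its vertices---yields $k\, t_k(r) = m_k\, v(r) + O(r)$, hence $t(r) = v(r)\sum_{i=1}^j 1/k_i + O(r)$. Similarly, each interior vertex has valence $j$ and each edge has two endpoints, so $2 e(r) = j\, v(r) + O(r)$.

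Substituting these into Euler's formula gives
\[
v(r) - \tfrac{j}{2}v(r) + v(r)\sum_{i=1}^j \tfrac{1}{k_i} = 1 + O(r).
\]
Dividing through by $v(r)$, which grows like $r^2$, and letting $r \to \infty$, the right-hand side tends to zero and we are left with $1 - j/2 + \sum_{i=1}^j 1/k_i = 0$, which rearranges to the desired identity $\sum_{i=1}^j (k_i - 2)/k_i = 2$. The main obstacle is making the boundary-to-interior bookkeeping rigorous, but this is precisely what the Gr\"unbaum--Shephard normality framework is designed to handle: under normality the ratio of boundary to interior counts vanishes as $r \to \infty$, so the $O(r)$ corrections disappear after normalizing by $v(r)$.
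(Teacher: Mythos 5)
The paper does not actually prove this lemma: it is quoted as a black box from Gr\"unbaum and Shephard (\emph{Tilings and Patterns}, Statement 3.5.4), so there is no internal proof to compare you against; the fair comparison is with the cited source, and your argument is essentially the standard one behind it (Euler's formula on large patches, vertex--edge and vertex--tile incidence counts, normality to make boundary terms negligible), so in substance your reconstruction is correct and your identity checks out: $1 - j/2 + \sum_i 1/k_i = 0$ is equivalent to $\sum_i (k_i-2)/k_i = 2$. One technical claim in your write-up is false as stated, though easily repaired: normality does \emph{not} guarantee that the set $A(r)$ of tiles meeting the disc of radius $r$ is a topological disc, even for large $r$. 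A small tile lying just outside the disc can fail to meet it while every one of its neighbours does, creating a hole in $A(r)$, and nothing prevents this from happening at arbitrarily large radii. The repair stays entirely within your framework: either enlarge $A(r)$ by adjoining the tiles filling such holes (this is how Gr\"unbaum--Shephard define patches), or observe that every hole is enclosed by tiles meeting the circle of radius $r$, all of which lie in the annulus $r - 2U \le \lvert y - x\rvert \le r + 2U$ and each of which contains a ball of radius $u$, so the number of holes is $O(r)$; then Euler's relation reads $v(r) - e(r) + t(r) = 1 - O(r)$, and after dividing by $v(r) \sim r^2$ this correction dies in the limit exactly like your other $O(r)$ terms, leaving the conclusion unchanged.
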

Each vertex is incident to evenly many tiles
(by Lemma~\ref{evenval} with $I=\{0,1\}$),
which alternate orbits.
With 6 tiles at each vertex,
the only solution is when all tiles are triangles, $(3^6)$;
but this is the regular tiling by triangles.
So we consider 4 tiles at each vertex.
If none of the tiles are triangles,
the only solution is four squares, $(4^4)$;
but this is the regular tiling by squares.
So we must have $(3.k.3.k)$.
The only solution is $k = 6$,
which is the trihexagonal tiling.
This tiling has two triangles and two hexagons alternating at each vertex.

On the other hand, if $\Gamma(\sT)$ acts transitively on facets,
we apply
\begin{lmm}[{\cite[\nopp 3.5.1]{grunbaum1986tilings}}]
If every tile of a normal tiling $\sT$ has $k$ vertices,
and these vertices have valences $j_1, \dotsc, j_k$,
then
\[
\sum_{i=1}^k \frac{j_i - 2}{j_i} = 2.
\]
\end{lmm}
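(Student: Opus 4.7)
The plan is to derive the identity from a density version of Euler's formula for the plane, using normality to control boundary effects. Let $B_r$ be the closed disk of radius $r$ in $\E^2$, and let $U_r = \bigcup \set{T}{T \cap B_r \neq \emptyset}$ be the union of tiles of $\sT$ meeting $B_r$. Write $V(r)$, $E(r)$, $F(r)$ for the numbers of vertices, edges, and tiles (respectively) contained in $U_r$. The normality condition \ref{bounded} provides constants $u, U$ such that every tile contains a ball of radius $u$ and is contained in a ball of radius $U$, which yields $F(r) = \Theta(r^2)$ (since the tiles cover $B_r$ and each has area in $[\pi u^2, \pi U^2]$) and implies that any bounded-width neighborhood of $\partial B_r$ contains only $O(r)$ cells of any dimension.

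First I would invoke the classical Euler relation $V(r) - E(r) + F(r) = 1$ (counting only bounded faces). For $r$ large this holds exactly, since $U_r$ is a finite, simply connected planar cell complex; simple connectivity for large $r$ follows from the uniform bounds on tile sizes together with condition \ref{intersect}.

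Next I would evaluate $E(r)$ and $V(r)$ in terms of $F(r)$. Each tile has $k$ edges; summing over tiles in $U_r$ counts each interior edge twice and each boundary edge of $U_r$ once, so $k F(r) = 2 E(r) - (\text{boundary edges}) = 2 E(r) - O(r)$, whence $2 E(r) = k F(r) + O(r)$. For vertices I would sum over (tile, vertex) incidences weighted by $1/\deg(v)$: the hypothesis that the vertex valences around every tile are $j_1, \dotsc, j_k$ gives
\[
\sum_{T \subset U_r} \sum_{v \in T} \frac{1}{\deg(v)} = F(r) \sum_{i=1}^{k} \frac{1}{j_i},
\]
while each interior vertex of $U_r$ contributes total weight exactly $1$ and boundary vertices contribute a bounded defect, so the same double sum equals $V(r) + O(r)$.

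Substituting both estimates into the Euler relation, dividing by $F(r) = \Theta(r^2)$, and letting $r \to \infty$ produces
\[
\sum_{i=1}^{k} \frac{1}{j_i} - \frac{k}{2} + 1 = 0,
\]
which rearranges to $\sum_{i=1}^{k} \frac{j_i - 2}{j_i} = 2$. The main obstacle is controlling the boundary error terms, and this is precisely where \ref{bounded} does its work: without a uniform lower bound on tile sizes, the boundary layer could contain $\Theta(r^2)$ cells (say, arbitrarily small tiles accumulating near $\partial B_r$), the $O(r)$ estimates would fail, and the identity could break. Under normality, the boundary contribution is linear in $r$ while the bulk is quadratic, and the ratio vanishes in the limit.
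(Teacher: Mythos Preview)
The paper does not give its own proof of this lemma; it is quoted verbatim from Gr\"unbaum and Shephard's \emph{Tilings and Patterns} (statement 3.5.1) and used as a black box, just as the companion statement 3.5.4 is. So there is no in-paper argument to compare against.

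That said, your sketch is the standard proof and is essentially what Gr\"unbaum and Shephard do: Euler's relation on a large patch, the incidence counts $2E(r)=kF(r)+O(r)$ and $V(r)=F(r)\sum_i 1/j_i + O(r)$, then divide by $F(r)$ and let $r\to\infty$. The algebra and the role of normality are correct. One small gap: the region $U_r$ you define (the union of all tiles meeting $B_r$) need \emph{not} be simply connected, even under normality. A small tile lying entirely in the annulus $B_{r+2U}\setminus B_r$ can be completely surrounded by larger tiles that do reach into $B_r$, leaving a hole in $U_r$. The fix is easy---replace $U_r$ by the union of $U_r$ with all bounded complementary components (this is the patch $A(r)$ in Gr\"unbaum--Shephard); the added tiles lie in an annulus of bounded width, so there are only $O(r)$ of them and your error estimates are unaffected. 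With that adjustment the argument goes through.
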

Every facet has evenly many sides
(by Lemma~\ref{evenval} with $I=\{1,2\}$),
and the valence of each vertex alternates.
Clearly, this has the same solutions as before.
In a notation $[j_1\ldots j_k]$ giving, in cyclic order,
the valence of each vertex adjacent to a tile,
we have $[3^6]$, the regular tiling by hexagons;
$[4^4]$, the regular tiling by squares;
and $[3.6.3.6]$, the rhombille tiling.
The latter has rhombus tiles, with three tiles meeting at the obtuse corners,
and 6 tiles meeting at the acute corners.

\subsection{Tilings of \texorpdfstring{$\E^3$}{E\textasciicircum 3}}\label{space}
A tiling of $\E^3$ has rank 4. Hence, 
by Lemma~\ref{CombRegVerfsFacets},
if it has combinatorially regular facets and vertex figures,
it must be combinatorially regular.
So a combinatorially two-orbit tiling $\sT$ of $\E^3$ must have some tiles or some 
vertex figures from the list of two-orbit polyhedra.
By Corollary~$2'$,
the class of $\sT$ must be $2_{\{0,1,2\}}$ (tile-intransitive),
$2_{\{1,2,3\}}$ (vertex-intransitive),
or $2_{\{1,2\}}$ (fully transitive).
We consider these cases.

\subsubsection{Tile-intransitive}
In this case, $\sT$ is in class $2_{\{0,1,2\}}$
and the automorphism group is transitive on the vertices,
edges, and 2-faces of the tiling.
There are two different tile orbits,
and each tile must be combinatorially regular (since the orbit of a flag
is determined entirely by which type of tile it includes).
Thus, all the vertices have isomorphic vertex figures,
which must be a two-orbit polyhedron;
since there are two types of tile,
the vertex figure must be facet-intransitive,
i.e.\@ the cuboctahedron or the icosidodecahedron.

With the cuboctahedron as vertex figure, each vertex is 3-valent in some tiles,
and 4-valent in others.
The only regular polyhedron with 4-valent vertices is the octahedron;
the only regular polyhedron with 3-valent vertices and triangular faces (to match the octahedron)
is the tetrahedron. But the tiling built from tetrahedra and octahedra in this manner is the
tetrahedral-octahedral honeycomb, $\{3,\stak 3 4, 4\}$,
one of the (geometrically) two-orbit tilings.

With the icosidodecahedron as vertex figure,
each vertex is 3-valent in some tiles, and 5-valent in others.
The only regular polyhedron with 5-valent vertices is the icosahedron,
and the other tiles must be tetrahedra.
Such a tetrahedral-icosahedral tiling has type $\{3, \stak 3 5, 4\}$.
Indeed, a tiling can be built up in such a way, in hyperbolic space;
it is known as the alternated order-5 cubic honeycomb.
It can be carved out of a tiling by cubes, with 5 around each edges, $\{4,3,5\}$,
which is a regular tiling of hyperbolic space.
Inscribe a tetrahedron in each cube, so that tetrahedra in adjacent cubes alternate direction.
The shape left around a vertex which is not part of the tetrahedron is an icosahedron
(there are 20 cubes around each vertex in $\{4,3,5\}$.)

We show by contradiction that there is no normal tiling of $\E^3$ of this type.
Suppose $\sT$ is a normal tetrahedral-icosahedral tiling.
Divide each icosahedron of $\sT$ into twenty pyramids (over each of its 2-faces), and
adjoin each of these pyramids to the tetrahedron with which it shares the
2-face.
Thus we have partitioned $\E^3$ into tiles,
one for each tetrahedron in $\sT$,
consisting of the tetrahedron
with a pyramidal cap added to each of its 2-faces.
These tiles need not be convex,
but are isomorphic to cubes:
Each tile has six neighboring tiles,
with each of which it shares two triangular faces
of its pyramidal caps; we treat each such pair of triangular
faces as a single ``skew'' 4-gonal face.
Thus we get a tiling $\sC$ topologically isomorphic to the order-5
cubic tiling $\{4,3,5\}$. If we start with
a normal tiling, this one will be, also.
Say each tile contains a ball of radius $u$ and is contained in a ball
of radius $U$.
Then the number of tiles in a ball of radius $r$ is at most $r^3/u^3$.

Next consider a growing sequence of patches of the tiling $\sC$.
(For our purposes, a \emph{patch} can be defined
as any finite set of tiles of $\sC$ whose union is homeomorphic to a ball.)
Begin with a vertex of the tiling,
designated $A_0$.
Let $A_1$ consist of all
the tiles containing $A_0$, $A_2$ consist of all the tiles with nonempty intersection
with the union of $A_1$, and so on, so $A_{n+1}$ consists of all the tiles
with nonempty intersection with the union of $A_n$.

Let us call a tile of $A_n$ which has any 2-face on the boundary of $A_n$
a $k$-tile if it has $k$ 2-faces on the boundary of $A_n$.
By induction, we see that all tiles on the boundary are 1-tiles,
2-tiles, or 3-tiles; that every edge on the boundary of $A_n$
is contained in either 1 or 2 tiles of $A_n$; and that every vertex on the boundary of $A_n$
is contained in either 1, 2, or 5 tiles of $A_n$.
\begin{itemize}
\item
A tile not in $A_n$ which contains a 2-face in $A_n$
becomes a 1-tile of $A_{n+1}$.
Each of its four exposed edges is in two tiles of $A_{n+1}$.
See the leftmost example in Figure~\ref{newtiles}.
\item
A tile not in $A_n$ which contains only an edge in $A_n$
becomes a 2-tile of $A_{n+1}$.
Six of its exposed edges are in two tiles of $A_{n+1}$,
while one exposed edge is only in this tile.
See the middle example in Figure~\ref{newtiles}.
\item
A tile not in $A_n$ which contains only a vertex in $A_n$
becomes a 3-tile of $A_{n+1}$.
Six of its exposed edges are in two tiles of $A_{n+1}$,
while three exposed edges are only in this tile.
See the rightmost example in Figure~\ref{newtiles}.
\end{itemize}

\begin{figure}[h]
\begin{tikzpicture}[y={(3.85mm,3.85mm)},z={(0cm,1cm)},scale=2,every node/.style={color=black,fill=white,circle,solid,inner sep=.5pt},baseline=(lbl)]
\fill[gray!50] (0,0) -- (1,0) -- (1,1) -- (0,1) -- cycle;
\draw (0,0,0) -- (1,0,0) -- (1,1,0);
\draw[thin,gray] (1,1,0) -- (0,1,0) -- (0,0,0);
\draw (0,0,0) -- (.1,.1,1) node[draw] (a) {$\scriptstyle{5}$}
      (1,0,0) -- (.9,.1,1) node[draw] (b) {$\scriptstyle{5}$}
      (1,1,0) -- (.9,.9,1) node[draw] (c) {$\scriptstyle{5}$};
\draw[thin,gray] (0,1,0) -- (.1,.9,1) node[draw] (d) {$\scriptstyle{5}$};
\draw (a) -- node {$\scriptstyle{2}$} (b)
       -- node {$\scriptstyle{2}$} (c)
       -- node {$\scriptstyle{2}$} (d)
       -- node {$\scriptstyle{2}$} (a);
\node[rectangle] (lbl) at (.5,.5,-.5) {Over a 2-face in $A_n$};
\end{tikzpicture}
\hfil
\begin{tikzpicture}[y={(9mm,7mm)},z={(0cm,1.5cm)},x={(.9cm,0cm)},every node/.style={color=black,fill=white,circle,solid,inner sep=.5pt},baseline=(lbl)]
\draw[ultra thick] (1,0,0) -- (1,1,0);
\draw (1,0,0) -- (0,0,1) node[draw] (a) {$\scriptstyle{5}$}
      -- node {$\scriptstyle{2}$} (1,0,2) node[draw] (b) {$\scriptstyle{2}$}
      -- node {$\scriptstyle{2}$} (2,0,1) node[draw] (c) {$\scriptstyle{5}$} -- cycle;
\draw[thin,gray] (1,1,0) -- (0,1,1) node[draw] (d) {$\scriptstyle{5}$}
      -- node {$\scriptstyle{2}$} (1,1,2) node[draw] (e) {$\scriptstyle{2}$};
\draw (e) -- node {$\scriptstyle{2}$} (2,1,1) node[draw] (f) {$\scriptstyle{5}$} -- (1,1,0);
\draw (b) -- node {$\scriptstyle{1}$} (e)
      (c) -- node {$\scriptstyle{2}$} (f);
\draw[thin,gray] (a) -- node {$\scriptstyle{2}$} (d);
\node[rectangle] (lbl) at (1,.5,-.5) {Over an edge in $A_n$};
\end{tikzpicture}
\hfil
\begin{tikzpicture}[y={(5mm,3.5mm)},z={(0cm,-1.2cm)},every node/.style={color=black,fill=white,circle,solid,inner sep=.5pt},baseline=(lbl)]
\node[draw] (v) at (0,0,0) {$\scriptstyle{1}$};
\draw (v) -- node {$\scriptstyle{1}$} (1,0,1) node[draw] (a) {$\scriptstyle{2}$}
      (v) -- node {$\scriptstyle{1}$} (-.5,-.866,1) node[draw] (c) {$\scriptstyle{2}$};
\draw[thin,gray] (v) -- node {$\scriptstyle{1}$} (-.5,.866,1) node[draw] (b) {$\scriptstyle{2}$}      
      (.5,.866,2) node[draw] (d) {$\scriptstyle{5}$} -- node {$\scriptstyle{2}$} 
      (b) -- node {$\scriptstyle{2}$} (-1,0,2) node[draw] (e) {$\scriptstyle{5}$};
\draw (e) -- node {$\scriptstyle{2}$} 
      (c) -- node {$\scriptstyle{2}$} (.5,-.866,2) node[draw] (f) {$\scriptstyle{5}$} -- node {$\scriptstyle{2}$} 
      (a) -- node {$\scriptstyle{2}$}  (d) -- 
      (0,0,3) node[fill=black,inner sep=2pt] (w) {}
      (e) -- (w)
      (f) -- (w);
\node[rectangle] (lbl) at (0,0,3.4) {Over a vertex in $A_n$};
\end{tikzpicture}
\caption{Tiles of $A_{n+1} \enleve A_n$. Vertices and edges on the boundary of $A_{n+1}$ are labeled by the number of tiles of $A_{n+1}$ containing them. Elements belonging to $A_n$ are darkened.}\label{newtiles}
\end{figure}

Let $a_n$ be the number of 1-tiles in $A_n$,
$b_n$ be the number of 2-tiles, and $c_n$ be the number of 3-tiles.
The 1-tiles in $A_{n+1}$ are those tiles which share a 2-face with
some $k$-tile of $A_n$,
so we have
\[
a_{n+1} = a_n + 2 b_n + 3 c_n.
\]
The 2-tiles in $A_{n+1}$ are added above edges on the boundary
of $A_n$. One such 2-tile is added above each edge contained
in two tiles of $A_n$, and two such 2-tiles are added above
each edge contained in a unique tile of $A_n$.
Counting the number of edges of each type in the boundary of $A_n$ yields
\begin{align*}
b_{n+1} &= \frac{4 a_n + 6 b_n + 6 c_n}{2} + 2 (b_n + 3 c_n) \\
&= 2 a_n + 5 b_n + 9 c_n.
\end{align*}
The 3-tiles in $A_{n+1}$ are added above vertices on the boundary of $A_n$.
A vertex contained in five tiles of $A_n$
is also contained in five 1-tiles of $A_{n+1}$, added above
the five incident 2-faces in the boundary of $A_n$,
and in five 2-tiles of $A_{n+1}$ added above the five incident edges
in the boundary of $A_n$,
so we need to add five more 3-tiles to include all 20 of the incident tiles.
A vertex contained in two tiles of $A_n$
is incident to eight 3-tiles of $A_{n+1}$,
and a vertex in a unique tile of $A_n$
is incident to ten 3-tiles of $A_{n+1}$.
Hence
\begin{align*}
c_{n+1} &= 5 \frac{4 a_n + 4 b_n + 3 c_n}{5} + 8 \frac{2 b_n + 3 c_n}{2} + 10 c_n \\
&= 4 a_n + 12 b_n + 25 c_n.
\end{align*}

Thus we have an equation for the number of $k$-tiles on the boundary
of each patch $A_n$,
beginning with the patch $A_1$ consisting of the 20 tiles incident to $A_0$:
\[
\begin{bmatrix}
a_n \\ b_n \\ c_n
\end{bmatrix}
=
\begin{bmatrix}
1 & 2 & 3 \\
2 & 5 & 9 \\
4 & 12 & 25
\end{bmatrix}^{n-1}
\begin{bmatrix}
0 \\ 0 \\ 20
\end{bmatrix}.
\]
This matrix is diagonalizable, making it straightforward to solve
for the total number of tiles in the patch $A_n$:
\[
\abs{A_n} = 
\frac 5 7 \left(\frac{9}{2\sqrt{14}}\bigl((15 + 4\sqrt{14})^n - (15 - 4 \sqrt{14})^n\bigr) - 8n\right).
\]
This is exponential in $n$;
the number of tiles increases by a factor of roughly 30 in each
successive patch.
Now consider a ball centered at $A_0$ with radius $2nU$.
This ball contains the patch $A_n$,
but the number of tiles in the ball is at most
$(2nU)^3/u^3$.
An exponential function of $n$ cannot remain bounded by a cubic
function of $n$,
so there must be some $n$ such that $\abs{A_n} > (2nU)^3/u^3$,
a contradiction.

Therefore, no normal tiling of $\E^3$ has type $\{4,3,5\}$, even 
allowing non-convex tiles.
So no tetrahedral-icosahedral tiling
of type $\{3, \stak 3 5, 4\}$ can be normal either.
On the other hand, there seems to be no obstruction to constructing
(non-normal) LFC tilings of these types.

\subsubsection{Vertex-intransitive}
In this case, the orbit of a flag is determined by the vertex it contains.
So the vertex figures are combinatorially regular. The tiles are two-orbit vertex-intransitive polyhedra, i.e.\@ the rhombic dodecahedron or rhombic triacontahedron.

With the rhombic dodecahedron, a vertex which is incident
to 4 edges in a given tile
has a vertex figure with a square face; hence the vertex figure is a cube.
Hence each edge incident to the vertex is in 3 tiles.
A vertex which is incident to 3 edges in a given tile
has a vertex figure with triangular faces.
Every edge of the tiling is incident to one vertex of each type,
hence is in 3 tiles,
so the second type of vertex figure must be a tetrahedron.
Rhombic dodecahedra put together in this way form the rhombic dodecahedral honeycomb $\{4,\stak 3 4, 3\}$, one of the (geometrically) two-orbit tilings.

With the rhombic triacontahedron as tile, any vertex
which is incident to five edges in a given tile
has a pentagon in its vertex figure; hence its vertex figure
is a combinatorially regular dodecahedron.
Every edge is incident to one vertex of this type,
so every edge is in three tiles.
Thus the other vertices, which are incident to three edges in each tile,
have tetrahedra for vertex figures.
This potential tiling  has type $\{4, \stak 3 5, 3\}$ and is dual to the tetrahedral-icosahedral tiling discussed above. Like that one, this tiling can be realized in hyperbolic space, with a two-orbit symmetry group.
For any normal tiling there is a dual tiling which is also normal
(but the tiles of which are not necessarily convex).
Hence, if a normal tiling of type $\{4, \stak 3 5, 3\}$ existed,
we could find a normal tiling of type $\{3, \stak 3 5, 4\}$, which we know to be impossible.

\subsubsection{Class $2_{\{1,2\}}$}
These is the same class discussed in Section~\ref{4polys},
and the same considerations establish that
we have either a cuboctahedron vertex figure with rhombic dodecahedra as tiles, type $\{4,\stak 3 4, 4\}$,
or an icosidodecahedron vertex figure with rhombic triacontahedra as tiles,
type $\{4, \stak 3 5, 4\}$.
(We note that the cuboctahedron and icosidodecahedron
are non-tiles, meaning there cannot be any tiling of $\E^3$ using only tiles
isomorphic to these; see \cite{schulte1985existence}.)

Perhaps these types can be realized as LFC tilings.
However, there is no such normal tiling.
Essentially the same proof as in Section~\ref{4polys} applies,
along with the Normality Lemma \cite[45]{schattschneider1997tilings},
which says that in a normal tiling, the ratio of
(the number of tiles that meet the boundary of a spherical patch of the tiling)
to (the number of tiles in the patch) goes to zero as the radius of the patch grows.
The two methods of counting internal angles of 2-faces in Section~\ref{4polys} hold for all the faces in the interior of a given patch. Discrepancies occur only at tiles on the boundary, where a vertex is not surrounded by all the 2-faces incident to it in the tiling. Taking the limit as the patch grows, the discrepancies go to zero and the inequality remains.
The details are too tedious to include here.

With these ruled out, we have established

\begin{thm}
Every normal combinatorially two-orbit tiling of $\E^3$
is isomorphic to one of the (geometrically) two-orbit tilings:
either the tetrahedral-octahedral honeycomb
or its dual, the rhombic dodecahedral honeycomb.
\end{thm}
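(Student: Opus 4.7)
The plan is to decompose into the three possible classes of a combinatorially two-orbit LFC tiling of $\E^3$ given by Corollary~$2'$: tile-intransitive (class $2_{\{0,1,2\}}$), vertex-intransitive (class $2_{\{1,2,3\}}$), and fully transitive (class $2_{\{1,2\}}$). In each class, Lemma~\ref{CombRegVerfsFacets} together with Theorem~\ref{thm:combintopes} pins down the combinatorial types of the tiles and vertex figures, since at least one of the two must be combinatorially two-orbit (else the whole tiling would be combinatorially regular), and whichever is two-orbit has to appear in the known list of four polyhedra. A common 2-face shared by tiles and vertex figures then forces the modified Schl\"afli symbol and cuts the possibilities down to a handful.

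First I would treat the tile-intransitive case. The vertex figures are then all isomorphic, and since two distinct orbits of tiles meet at each vertex the vertex figure must be facet-intransitive, so it is a cuboctahedron or an icosidodecahedron. In the cuboctahedron case the only regular polyhedra whose valence-types match are tetrahedra and octahedra, giving the tetrahedral-octahedral honeycomb $\{3,\stak{3}{4},4\}$, which realizes the target. In the icosidodecahedron case we would be forced to type $\{3,\stak{3}{5},4\}$, and I would invoke the exponential patch-growth estimate from Section~\ref{space} (truncation at icosahedral vertices produces a topological $\{4,3,5\}$, whose normal realization in $\E^3$ is contradicted by the cubic volume bound) to exclude normality.

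Second, I would handle the vertex-intransitive case by duality: the tiles are vertex-intransitive two-orbit polyhedra, namely rhombic dodecahedra or rhombic triacontahedra. The former gives the rhombic dodecahedral honeycomb $\{4,\stak{3}{4},3\}$. The latter gives $\{4,\stak{3}{5},3\}$, dual to the tetrahedral-icosahedral type just ruled out; since the dual of a normal tiling is normal, this is impossible as well.

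Finally, for class $2_{\{1,2\}}$ the argument of Section~\ref{4polys} applies mutatis mutandis and leaves only the types $\{4,\stak{3}{4},4\}$ and $\{4,\stak{3}{5},4\}$. I expect this last step to be the main obstacle: the finite polytope argument counts interior angles of $2$-faces exactly, but for an infinite tiling the two angle-sum expressions are only meaningful inside a finite patch, and boundary tiles spoil the equality. The fix is to apply the Normality Lemma to a growing sequence of spherical patches, so that the ratio of boundary tiles to interior tiles tends to zero; the interior contribution gives the equality $f_2/f_0 \to 6$ (resp.\ $15$) while the per-vertex angle-sum bound at interior vertices gives the strict inequality $< 6$ (resp.\ $< 12$), contradicting normality in the limit. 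With all three classes disposed of, only the two target tilings remain.
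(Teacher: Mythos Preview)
Your proposal is correct and follows essentially the same approach as the paper: the same three-class decomposition via Corollary~$2'$, the same identification of possible types via Theorem~\ref{thm:combintopes} and Lemma~\ref{CombRegVerfsFacets}, the exponential patch-growth argument against a normal $\{4,3,5\}$, duality for the type $\{4,\stak{3}{5},3\}$, and the Normality Lemma to absorb boundary terms in the class $2_{\{1,2\}}$ angle-sum argument. One small inaccuracy: the passage to a topological $\{4,3,5\}$ in the tile-intransitive case is done by dividing each icosahedron into twenty pyramids and adjoining them to the adjacent tetrahedra (the inverse of truncation), not by truncation, but this does not affect the logic.
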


\section{Open Questions}
\begin{qst}
Is a combinatorially regular LFC tiling of $\E^d$, $d \geq 3$,
necessarily isomorphic to a regular tiling of $\E^d$?
(Except for $d = 4$, this says that any combinatorially regular tiling
is isomorphic to the tiling by $d$-cubes.)
\end{qst}
The answer is probably \emph{no}, but the author does not know a counterexample.

\begin{qst}
Is a combinatorially regular normal LFC tiling of $\E^d$ necessarily isomorphic to a regular tiling of $\E^d$?
\end{qst}
The answer is probably \emph{yes}, but the author knows a proof only for the cases $d \leq 2$.

\begin{qst}
Are there combinatorially two-orbit LFC tilings of $\E^3$ not isomorphic to any two-orbit tiling?
\end{qst}
Any such tiling would have one of the previously discussed types 
$\{3,\stak{3}{5},4\}$, $\{4,\stak{3}{5},3\}$, $\{4,\stak{3}{4},4\}$, or $\{4,\stak{3}{5},4\}$.
The author believes that non-normal tilings of these types probably do exist.

For results in these directions, as well as other open questions of this type,
see \cite{schulte2010combinatorial}.

\printbibliography

\end{document}